\title{Two-player incentive compatible outcome functions are affine maximizers}
\author{Bo Lin} 
\address[Bo Lin]{Department of Mathematics, University of Texas at Austin, Texas TX 78712, USA}
\email{linbomath@gmail.com}
\author{Ngoc Mai Tran}
\address[Ngoc Mai Tran]{Department of Mathematics, University of Texas at Austin, Texas TX 78712, USA and Hausdorff Center for Mathematics, Bonn 53115, Germany}
\email{ntran@math.utexas.edu}
\date{\today}
\DeclareMathOperator{\R}{\mathbb{R}}
\DeclareMathOperator\tdet{tdet}
\theoremstyle{plain}
\newtheorem{theorem}{Theorem}[section]
\newtheorem*{theorem*}{Theorem}
\newtheorem{lemma}[theorem]{Lemma}
\newtheorem{proposition}[theorem]{Proposition}
\newtheorem{corollary}[theorem]{Corollary}
\theoremstyle{definition}
\newtheorem{definition}[theorem]{Definition}
\newtheorem{example}[theorem]{Example}
\newtheorem{remark}[theorem]{Remark}
\newcommand{\minH}{\underline{\mathcal{H}}}
\newcommand{\calC}{\mathcal{C}}
\newcommand{\calS}{\mathcal{S}}
\newcommand{\calT}{\mathcal{T}}
\newcommand{\T}{\mathcal{T}}
\DeclareMathOperator*{\IC}{IC}
\DeclareMathOperator*{\AM}{AM}
\subjclass[2010]{91A80, 14T05}
\keywords{Mechanism Design, Roberts' Theorem, Tropical Geometry, Tropical Determinant, Tropical Hyperplane Arrangement}
\begin{document}
	
\begin{abstract}
	In mechanism design, for a given type space, there may be incentive compatible outcome functions which are not affine maximizers. Using tools from linear algebra and tropical geometry, we prove that for two-player games on a discrete type space, any given outcome function can be turned into an affine maximizer through a nontrivial perturbation of the type space. Furthermore, our theorems are the strongest possible in this setup. 
\end{abstract}

\maketitle

\section{Introduction}
Mechanisms are engineered games, devised to implement social choice functions in an economy where the players are rational, act in their own interests, and make decisions based on private information called their \emph{type}. A special case is single-item auction, where the players are bidders, a player's type is what they truly think the good on sales is worth, and a desirable social choice function could be the one that would assign the good to the person who values it the most, for example. In general, mechanism design encompasses much broader setups. The theory is rich and fundamental to game theory, economics and computer science \cite{laffont2009theory,vohra2011mechanism}. However, even in the simplest case, some fundamental questions remain open. This paper concerns one such problem, namely, how far is the set of incentive compatible outcomes from the set of affine maximizers in a discrete type space. 

Throughout this paper, we consider the simplest setup of deterministic, dominant strategy incentive compatible (IC) mechanisms on $m$ outcomes and $n$ players. This means the following. The game can take one of $m \geq 1$ (finitely many) possible outcomes. There are $n \geq 1$ players, each has a true type $t^i \in \R^m$, known only to them, where $t^i_j$ is how much player $i$ values outcome $j$. The mechanism designer does not see the true types. Instead, she sees the individual type spaces $\calT^1, \dots, \calT^n$, where each $\calT^i$ is an ordered tuple of points in $\R^m$ that contains the true type of the $i$-th player.  The designer needs to choose a mechanism $(g,p)$, which consists of an outcome function $g: \calT := \calT^1 \times \dots \times \calT^n \to [m]$, together with a payment function $p: \calT \to \R^n$. Given a mechanism $(g,p)$, the game works as follows. Each player declares a type (which may or may not be their true type), and based on that declared type $s = (s^1, \dots, s^n)$ in $\calT$, the game outcome is $g(s)$, and player $i \in [n]$ needs to pay $p(s)_{i}$ to the designer. When the players declare $s$ in $\calT$, the profit of player $i$ is $t^i_{g(s)} - p(s)_{i}$. 
The players are assumed to be rational, which means that given other players' declared types, player $i$ will choose to declare $u$ in $\calT^{i}$ that maximizes her profit. This $u$ may or may not be her true type. The mechanism is incentive compatible in dominant strategy, abbreviated (IC), if: for every player $i$, declaring her true type would maximize her profit, regardless of what the others declare. That is, for each fixed $s = (s^1, \dots, s^{i-1}, t^i, s^{i+1}, \dots s^n)$, for all $u \in \calT^i$,
\begin{equation}\label{eqn:ic}
 t^i_{g(s)} - p(s)_{i} \geq t^i_{g(s^1, \dots, s^{i-1}, u, s^{i+1}, \dots, s^n)} - p(s^1, \dots, s^{i-1}, u, s^{i+1}, \dots, s^n)_{i}. 
\end{equation}
Incentive compatible mechanisms are simple and strategy proof, and thus encourage a more transparent and efficient economy.
 
A fundamental problem in mechanism design is to characterize all possible IC mechanisms on a given type space $\calT$. For single-player mechanisms (where $n = 1$), there are multiple characterizations \cite{vohra2011mechanism}. For example, it is the set of all covectors corresponding to cells in the tropical polytope of $\calT$ \cite{CrowellTran1}. However, when $n \geq 2$, the problem remains wide open, except for a handful of special cases \cite{bartal2003incentive,lavi2003towards}. One particular subset of multi-player IC outcome functions heavily studied is the set of affine maximizers. Affine maximizers are tractable precisely because they can be handled using techniques from the single-player case. However, generally they form a strict subset of all possible IC outcome functions on $\calT$. There are very few spaces where all IC outcome functions are affine maximizers. The first result in this direction is Roberts' Theorem \cite{roberts1979characterization}, which states that if $\calT^i = \R^m$ for all $i$, then all IC outcome functions are affine maximizers. Since then, much efforts have been put in understanding and extending Roberts' Theorem, often to rather specific type spaces \cite{carbajal2013truthful,dobzinski2009modular,edelman2017dominant,lavi2009two, mishra2012roberts,vohra2011mechanism}, see \cite{mishra2018separability} for a recent overview of this literature. 

Our paper takes a novel view on Roberts' Theorem and its successors. Instead of looking for type spaces where all IC outcome functions are affine maximizers, we seek to quantify how large is the latter set relative to the former. Specifically, we ask: is a given IC outcome function $g$ on an $n$-player type space $\calT = \calT^1 \times \dots \times \calT^n$ an affine maximizer on some type space $\calS = \calS^1 \times \dots \times \calS^n$, where $\IC(\calT)$ and $\IC(\calS)$, the sets of IC outcome functions on $\calT$ and $\calS$ respectively, differ as little as possible?
In other words, can one perturb the type space slightly to include a given IC outcome function~$g$ in its set of affine maximizers? Call this the \emph{perturbed Roberts' problem}. 

Type space perturbation has appeared in the context of robust mechanism design
\cite{bergemann2005robust,bergemann2012robust,chung2007foundations, oury2012continuous}. 
Though the context is different, we share a common purpose with this literature, namely, to produce theorems in mechanism design that hold for even when the underlying type space is slightly disturbed through numerical roundings or arbitrary noise perturbations. Such results are especially important for practical applications such as auctions, where the player's true valuation of the object can only be approximated to start with. Another compelling reason to measure the gap between affine maximizers and all IC outcome functions rests in the main challenge of mechanism design: to find mechanisms which are optimal according to some criterion, for instance, manipulation-free or maximizes revenue. In these cases, one would like to know whether optimizing over all affine maximizers is very far from optimizing over all IC outcome functions \cite{dobzinski2011limitations,lavi2003towards}. In this view, the perturbed Roberts' problem is a measure of the approximability of IC outcome functions by affine maximizers. 

The exact result on the perturbed Roberts' problem depends on how much $\IC(\calS)$ is allowed to differ from $\IC(\calT)$. Under the tropical geometry view of mechanism design, for the single-player case, $\IC(\calT)$ is a topological constraint on the tropical hyperplane arrangements formed by $\calT$ \cite{horn2016topological}. Clearly approximation is easier but less interesting when the topological constraint is less rigid. Thus finding the strongest constraint where approximation still holds is particularly interesting. The main result of our paper, Theorem \ref{thm:main} is such a positive approximation result. Theorems \ref{thm:main.counter} and \ref{thm:2p-sym.counter} show that approximability dramatically fails under various relaxations of the constraints in Theorem \ref{thm:main}, thus proving that the result we obtained is the strongest possible in these settings.

\begin{theorem}[Two-player, one fixed type, one outcome function]\label{thm:main}
Let $\calT = \calT^1 \times \dots \times \calT^{n}$ be a finite type space on $n$ players and $m \geq 1$ outcomes. Write $\calS = \calS^1 \times \dots \times \calS^{n}$. Under the constraints $n = 2$ and $\calS^1 = \calT^1$, for any $g \in \IC(\calT^1 \times \calT^2)$, there exists a type space $\calS^2$ such that $g$ is an affine maximizer on $\calS = \calT^1 \times \calS^2$.
\end{theorem}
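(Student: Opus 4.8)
The plan is to reduce the two-player statement to the single-player characterization of IC outcome functions, exploiting the asymmetry granted by the constraint $\calS^1 = \calT^1$. Recall that an affine maximizer on $\calT^1 \times \calS^2$ is a selection $g(a,b) \in \argmax_{j \in [m]}(w_1 a_j + w_2 b_j + \kappa_j)$ for fixed weights $w_1, w_2 > 0$ and a fixed constant vector $\kappa \in \R^m$. The key observation is that, since player 1's type space is held fixed while player 2's is free, I can bake all the data needed to realize $g$ as an affine maximizer into the perturbed coordinates of player 2. The per-type information is carried entirely by the new second-coordinate points, while a single weight pair and constant vector serve every declaration at once.

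First I would freeze player 2's declaration. Writing $\calT^2 = (b^{(1)}, \dots, b^{(K)})$ with $K = |\calT^2|$, for each $k$ the section $a \mapsto g(a, b^{(k)})$ on $\calT^1$ is a single-player IC outcome function. Indeed, the payment $p(\cdot, b^{(k)})_1$ witnessing \eqref{eqn:ic} depends only on the selected outcome: if $g(a,b^{(k)}) = g(a',b^{(k)})$, the two instances of \eqref{eqn:ic} force the payments at $a$ and $a'$ to agree. Hence it descends to a price vector $q^{(k)} \in \R^m$ with $g(a, b^{(k)}) \in \argmax_j (a_j - q^{(k)}_j)$ for every $a \in \calT^1$; equivalently, this is the covector / tropical-polytope description of the single-player case \cite{CrowellTran1}. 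Outcomes never selected at $b^{(k)}$ receive price $+\infty$, which I replace by a single finite constant large enough that those outcomes stay out of the argmax; this is possible because $\calT^1$ is finite, so only finitely many comparisons must be dominated.

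Next I would set $\calS^2 := (\tilde b^{(1)}, \dots, \tilde b^{(K)})$ with $\tilde b^{(k)} := -q^{(k)}$, and take $w_1 = w_2 = 1$ and $\kappa = 0$. Then $w_1 a_j + w_2 \tilde b^{(k)}_j + \kappa_j = a_j - q^{(k)}_j$, so by construction $g(a, b^{(k)}) \in \argmax_j (a_j + \tilde b^{(k)}_j)$ for all $a$ and $k$; that is, $g$ is an affine maximizer on $\calT^1 \times \calS^2$. Note that the same weights and constant vector work simultaneously for every type of player 2, which is precisely the structural reason the reduction succeeds with one type space held fixed — and, dually, the reason the hypotheses cannot be relaxed: if both spaces were frozen the identity $\tilde b^{(k)} = -q^{(k)}$ becomes a nontrivial constraint on $\calT$, while for $n \ge 3$ the player-1 prices depend jointly on the other players and need not split across a single perturbed coordinate (cf. Theorems \ref{thm:main.counter} and \ref{thm:2p-sym.counter}).

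The main obstacle I anticipate is not the existence of the price vectors but the care needed to land inside the class of admissible type spaces and to recover $g$ cleanly. The delicate point is that, with $\calT^1$ fixed, forced ties in $\argmax_j(a_j - q^{(k)}_j)$ cannot always be broken: two player-1 types sharing the same pairwise valuation gap pin the corresponding price difference exactly, so $g$ must be permitted to \emph{select} among maximizers rather than be the unique maximizer. Handling these degeneracies is where the tropical determinant enters — $\tdet$ detects exactly when a section is tropically singular — and organizing the $q^{(k)}$ through the tropical hyperplane arrangement of $\calT^1$ is what lets me choose each $\tilde b^{(k)}$ within its argmax cell (and separate coincident points) so that $\calS^2$ is a genuine type space realizing $g$ as an affine maximizer.
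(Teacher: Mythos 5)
Your proposal is correct and is essentially the paper's own argument: you take the columns (sections) $g(\cdot,b^{(k)})$, which are single-player IC on $\calT^1$ by Corollary \ref{cor:IC2}, extract the price vectors $q^{(k)}$ from Lemma \ref{lem:ic}, and set the rows of $S^2$ to $-q^{(k)}$ with $\alpha_1=\alpha_2=1$ and $\gamma=Z=0$, exactly as in the paper. Your closing worries about tie-breaking and coincident points are unnecessary, since the affine-maximizer definition via Lemma \ref{lem:am} only requires $g(i,j)$ to attain (not uniquely attain) the maximum and $M(\calT^1,\calS^2,\mathbf{0})$ is an ordered tuple in which repeated points are indexed separately.
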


\begin{theorem}\label{thm:main.counter}
None of the constraints in Theorem \ref{thm:main} can be relaxed. In other words, there exist type spaces $\calT$ where the following versions of the perturbed Roberts' problem have no solution. 
\begin{enumerate}
	\item[(i)] For $n \geq 3$, find $\calS$ such that $g$ is an affine maximizer on $\calS$ and $\IC(\calS^1) = \IC(\calT^1)$.
	\item[(ii)] For $n \geq 2$, find $\calS$ such that $g$ is an affine maximizer on $\calS$ and $\IC(\calS) = \IC(\calT)$.
	\item[(iii)] For $n \geq 2$, find $\calS$ such that for any pair $g,h \in \IC(\calT)$, $g$ and $h$ are both affine maximizers on $\calS$ and $\IC(\calS^1) = \IC(\calT^1)$.
\end{enumerate}
\end{theorem}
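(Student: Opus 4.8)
All three statements are impossibility results, so the plan is to construct, for each part, an explicit finite type space $\calT$ together with the relevant outcome function(s), and then to prove that no perturbation $\calS$ satisfying the stated constraint can exist. The common engine is the linear-algebraic reformulation of the affine-maximizer property: $g$ is an affine maximizer on $\calS = \calS^1 \times \dots \times \calS^n$ exactly when the system
$$ \sum_{i} \lambda_i\, s^i_{g(s)} + c_{g(s)} \;\geq\; \sum_i \lambda_i\, s^i_k + c_k \qquad \bigl(\forall\, s \in \calS,\ \forall\, k \in [m]\bigr) $$
is feasible in unknowns $\lambda_1, \dots, \lambda_n \geq 0$ (not all zero) and $c_1, \dots, c_m \in \R$. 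By Farkas' lemma, infeasibility is certified by a nonnegative cyclic combination of these inequalities that collapses to a strict contradiction. For each part I would exhibit such a certificate and then argue that it is forced to persist under every perturbation the constraint allows.

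For (ii), the constraint $\IC(\calS) = \IC(\calT)$ pins down the full combinatorial type of the tropical hyperplane arrangement of the product space. I would first exhibit a two-player $\calT$ carrying an IC outcome function $g$ that is not an affine maximizer, so that its AM system already contains an infeasible cycle, and then show that because $\IC(\calS) = \IC(\calT)$ freezes the relevant covector data, every admissible $\calS$ is essentially a tropical rescaling of $\calT$ and hence inherits the same cycle. The crux is to verify that AM-feasibility is controlled by the fixed combinatorial type; should it fail to be a pure invariant, I would instead parametrize the combinatorial-type-preserving family explicitly and check that the AM system stays infeasible throughout.

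For (i) with $n \geq 3$, the added difficulty is that realizing $g$ as an affine maximizer now forces a single weight vector $(\lambda_1, \lambda_2, \lambda_3)$ to reconcile all three players at once, while relaxing the constraint to $\IC(\calS^1) = \IC(\calT^1)$ only fixes player~$1$'s combinatorial type and leaves players~$2$ and~$3$ coupled. I would design a three-player, three-outcome instance whose pairwise requirements are individually satisfiable but whose joint condition is a $3 \times 3$ tropical singularity that no choice of $\calS^2, \calS^3$ can meet while keeping player~$1$'s type fixed; here the tropical determinant $\tdet$ provides the obstruction. For (iii), I would build two IC functions $g, h$ on one two-player $\calT$ whose AM systems impose mutually exclusive inequalities on the free coordinates of $\calS^2$, so that any $\calS^2$ separating the outcomes for $g$ forces a violation for $h$, and symmetrically.

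The main obstacle throughout is not finding candidate examples but certifying robustness: I must show the obstruction survives every admissible perturbation, not merely the chosen coordinates. This amounts to proving that the sign data governing feasibility of the AM system is an invariant of the fixed $\IC$ type (for (ii)), or that the degrees of freedom genuinely available under the relaxed constraint are too few to satisfy an overdetermined tropical-determinant system (for (i) and (iii)). I expect the cleanest route is to keep everything as small as possible --- $m = 3$ and two or three types per player --- so that the relevant cycles and tropical determinants can be written down and checked directly.
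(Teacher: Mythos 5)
Your overall strategy is the same as the paper's: for each part, exhibit an explicit finite type space and outcome function(s), write the affine-maximizer condition as a linear system (Lemma \ref{lem:am}), and produce a nonnegative combination of its inequalities together with inequalities forced by the $\IC$-constraint that collapses to $0>0$. However, your write-up is only a plan: for a theorem whose entire content is the existence of counterexamples, no counterexample is constructed and no certificate is verified, so the substance of the proof is missing. The paper's examples are not generic --- (i) needs $n=3$, $m=6$, $r_i=2$ with carefully chosen permutation-matrices-like rows; (ii) needs $m=4$ with specific integer entries; (iii) needs two functions $g^1,g^2$ on a $2\times 2$ grid with $m=3$ --- and finding instances where the cyclic cancellation actually closes up is where the work lies. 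Your expectation that $m=3$ suffices throughout is not borne out by the paper's constructions for (i) and (ii).

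Two more specific gaps. First, for (ii) your claim that $\IC(\calS)=\IC(\calT)$ forces $\calS$ to be ``essentially a tropical rescaling of $\calT$'' is false: the set of type matrices with a prescribed $\IC$ set is an open polyhedral cone of positive dimension, characterized by Theorem \ref{thm:equalIC} as the matrices whose minors have the same sets of tropical-determinant-attaining permutations. This characterization is the tool you actually need (and only gesture at for part (i)): it converts the constraint $\IC(\calS^i)=\IC(\calT^i)$ into explicit \emph{strict} inequalities such as $S^{1}_{1,2}+S^{1}_{2,4}>S^{1}_{1,4}+S^{1}_{2,2}$, and it is precisely these strict inequalities, added to the weak AM inequalities, that yield $0>0$; without them every candidate certificate degenerates to $0\ge 0$. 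Second, for (iii) you overlook that the two affine-maximizer representations of $g$ and $h$ may use \emph{different} positive weights $(\alpha_1,\alpha_2)$ and $(\beta_1,\beta_2)$; the paper must take a case split on the sign of $\alpha_1\beta_2-\alpha_2\beta_1$ and choose weight-dependent coefficients for the combination, a step your ``mutually exclusive inequalities on the free coordinates of $\calS^2$'' picture does not account for.
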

We remark that the constraints imposed in Theorem \ref{thm:main} are non-trivial. The given outcome function $g$ is determined by~$\calT$, so $g$ being an affine maximizer on $\calS$ means that $\calS^2$ must satisfy \emph{some} of the inequalities and equalities as those satisfied by $\calT^2$. Thus $\calS^2$ cannot be trivial.

The three cases in Theorem \ref{thm:main.counter} correspond to various generalizations of Theorem \ref{thm:main}. Case (i) allows three or more players and $\calS^1$ to be different from $\calT^1$, as long as they have the same set of IC outcome functions. Case (ii) requires that the set of IC outcome functions in the second player must also match up, that is, $\IC(\calS^2) = \IC(\calT^2)$. Case (iii) requires that the same type space can be chosen for two different IC outcome functions $g$ and $h$. Theorem \ref{thm:main.counter} shows that without further constraints, all of these possible extensions of Theorem \ref{thm:main} do not hold. Thus, Theorem \ref{thm:main} is the strongest possible in this setup.

Our second major result explores neutral outcome functions in two-player games with identical type spaces, that is, $\calT^1 = \calT^2$ and $g$ is symmetric. This setup is particularly interesting in applications, for it means that the mechanism respects the player's anonymity. Theorem \ref{thm:main} implies that one can choose a type space $\calS = \calS^1 \times \calS^2$ such that $g$ is an affine maximizer on $\calS$ and $\IC(\calS^{1}) = \IC(\calT^{1})$.  One may ask if one can further require $\calS^1=\calS^2$. The answer is affirmative only if one drops the assumption that $\IC(\calS^{1}) = \IC(\calT^{1})$. 
\begin{theorem}\label{thm:2p-sym.counter}
There exists $\calT^1$ and symmetric $g \in \IC(\calT^1,\calT^1)$ such that for any $\calS^1\in \R^{r\times m}$ with $\IC(\calS^1)=\IC(\calT^1)$, we have that $g$ is not an affine maximizer on $\calS = \calS^1 \times \calS^1$.
\end{theorem}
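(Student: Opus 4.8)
Since this is a negative result, the plan is to exhibit an explicit pair $(\calT^1, g)$ and show that no admissible $\calS^1$ can make $g$ an affine maximizer. I would first record the defining inequalities of an affine maximizer and use symmetry to eliminate the unknown weights. Suppose toward a contradiction that $g(s_a,s_b) = \argmax_j(\lambda_1 s_{a,j} + \lambda_2 s_{b,j} + \gamma_j)$ for some $\lambda_1,\lambda_2>0$, $\gamma \in \R^m$ and some $\calS^1$ with $\IC(\calS^1)=\IC(\calT^1)$, where $s_a$ denotes the $a$-th row of $\calS^1$. Whenever $g(s_a,s_b)=k$, symmetry of $g$ gives $g(s_b,s_a)=k$ as well; adding the two argmax inequalities, the cross terms combine and the individual weights collapse into $\lambda_1+\lambda_2$. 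Writing $\gamma' := \tfrac{2}{\lambda_1+\lambda_2}\gamma$, this yields the weight-free necessary conditions
\[
(s_{a,k}+s_{b,k}) + \gamma'_k \;\ge\; (s_{a,j}+s_{b,j}) + \gamma'_j \qquad \text{for all } j,
\]
so it suffices to rule out the existence of a single offset vector $\gamma'$ satisfying them.

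The contradiction will come from the diagonal profiles. Specializing to $a=b$, the condition $g(s_a,s_a)=k$ forces $\gamma'_k - \gamma'_j \ge 2(s_{a,j}-s_{a,k})$ for every $j$. I would choose $\calT^1$ and $g$ so that there are two types $a,b$ and two outcomes $j,k$ with $g(t_a,t_a)=k$ and $g(t_b,t_b)=j$. The first profile gives $\gamma'_k-\gamma'_j \ge 2(s_{a,j}-s_{a,k})$ and the second gives $\gamma'_j-\gamma'_k \ge 2(s_{b,k}-s_{b,j})$; adding them, the offsets cancel and leave
\[
0 \;\ge\; 2\big[(s_{a,j}-s_{a,k}) - (s_{b,j}-s_{b,k})\big].
\]
Thus any admissible $\calS^1$ must satisfy $(s_{a,j}-s_{a,k}) \le (s_{b,j}-s_{b,k})$, and the plan is to arrange the construction so that the opposite strict inequality $(s_{a,j}-s_{a,k}) > (s_{b,j}-s_{b,k})$ is forced for every $\calS^1$ with $\IC(\calS^1)=\IC(\calT^1)$.

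The remaining task is twofold. First, I must show this difference-of-differences has its sign pinned down by $\IC(\calT^1)$: since the set of single-player IC outcome functions is exactly the covector data of the tropical arrangement of the type space (cf.~the characterization in \cite{CrowellTran1}), it is unchanged precisely on the region where all such differences-of-differences keep their sign. Concretely I would exhibit a witness $g_0 \in \IC(\calT^1)$ whose incentive-compatibility (absence of a negative cycle in the associated payment graph) is equivalent to $(t_{a,j}-t_{a,k}) > (t_{b,j}-t_{b,k})$; then any $\calS^1$ violating this strict inequality loses $g_0$ from its IC set, forcing $\IC(\calS^1)\ne\IC(\calT^1)$. Second, I must produce an actual symmetric $g\in\IC(\calT^1,\calT^1)$ realizing $g(t_a,t_a)=k$ and $g(t_b,t_b)=j$. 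Since a two-player outcome function is IC iff every induced single-player function (fixing the opponent's type) is single-player IC, this amounts to building a symmetric grid of outcomes all of whose rows and columns are single-player IC. Here $m=2$ is impossible --- with two outcomes single-player IC forces monotonicity in a single order, so the diagonal cannot be anti-monotone --- so I take $m=3$, where single-player IC permits the needed anti-monotone behavior on the pair $\{j,k\}$ while remaining IC overall.

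The main obstacle is the simultaneous satisfaction of all constraints in one small example: the symmetric grid must be IC in both players, its diagonal must be anti-monotone on the chosen outcome pair, and the governing difference-of-differences must be a genuine wall of the IC classification so that its sign is rigid under IC-equivalence. I expect the verification that every row and column of the grid is single-player IC --- equivalently, that the associated tropical type is consistent and retains the witness $g_0$ --- to be the delicate part, carried out by an explicit choice of a handful of points in $\R^3$ together with a direct check of the cyclic-monotonicity inequalities.
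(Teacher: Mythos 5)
Your overall strategy---symmetrize the affine-maximizer inequalities to eliminate the weights, restrict to diagonal profiles so the offset cancels around a cycle, and then contradict a strict inequality whose sign is rigid under $\IC$-equivalence (Theorem \ref{thm:equalIC})---is the right shape and matches the mechanism of the paper's argument. However, the closing move you propose, a cycle of length two (two types $a,b$ and two outcomes $j,k$ with $g(t_a,t_a)=k$, $g(t_b,t_b)=j$, contradicted by the rigid strict inequality $s_{a,j}+s_{b,k}>s_{a,k}+s_{b,j}$), cannot be completed: no pair $(\calT^1,g)$ with these properties exists. The strict inequality must hold in particular for $\calS^1=\calT^1$ itself, so you need $T^1_{a,j}+T^1_{b,k}>T^1_{a,k}+T^1_{b,j}$. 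But write $c=g(t_a,t_b)=g(t_b,t_a)$. Since $g\in\IC(\calT^1,\calT^1)$, by Corollary \ref{cor:IC2} the column of $g$ indexed by $t_a$ is a covector of $\calT^1$ taking the values $(k,c)$ at rows $(a,b)$, and the column indexed by $t_b$ takes the values $(c,j)$ there. Proposition \ref{prop:minordet} applied to these two covectors (on the row set $\{a,b\}$) gives
\[
T^1_{a,k}+T^1_{b,c}\ \ge\ T^1_{a,c}+T^1_{b,k}, \qquad T^1_{a,c}+T^1_{b,j}\ \ge\ T^1_{a,j}+T^1_{b,c}
\]
when $c\notin\{j,k\}$, and either single inequality degenerates to the conclusion when $c\in\{j,k\}$. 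Adding them yields $T^1_{a,k}+T^1_{b,j}\ge T^1_{a,j}+T^1_{b,k}$, which is exactly the negation of the strict inequality you need. So an anti-monotone diagonal on a single outcome pair is incompatible with $g$ being a symmetric IC grid for \emph{every} $m$, not just $m=2$; the third outcome you introduce as $g(t_a,t_b)$ does not help, because it merely chains the two $2\times 2$ constraints into the forbidden one.

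The repair, which is what the paper does, is to run your argument around a cycle of length three. Take $m=3$, three types, and a symmetric $g$ whose diagonal is the $3$-cycle $(2,3,1)$; the three diagonal affine-maximizer inequalities (with $k$ chosen to follow the opposite $3$-cycle) sum to
\[
S_{1,2}+S_{2,3}+S_{3,1}\ \ge\ S_{1,3}+S_{2,1}+S_{3,2},
\]
with the offsets cancelling exactly as in your scheme. The paper's $T^1$ is chosen so that the permutation $(3,1,2)$ \emph{uniquely} attains the max-plus tropical determinant of the full $3\times3$ matrix (equivalently, $(3,1,2)$ is the covector of a full-dimensional cell), so by Theorem \ref{thm:equalIC} any $\calS^1$ with $\IC(\calS^1)=\IC(\calT^1)$ satisfies the strict reverse inequality, giving $0>0$. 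The pairwise constraints forced by the off-diagonal entries of $g$ no longer telescope to the reverse of a $3\times3$ cycle inequality, which is precisely why length three succeeds where length two provably fails. Your proposal as written is therefore missing the decisive combinatorial ingredient, and the example you set out to construct does not exist.
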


Our proof technique builds on a theorem of tropical geometry that relates $\IC(\calT^1)$ to the tropical determinant of the minors of the matrix $\calT^1$. In economics term, this theorem states that revealing the set of IC outcomes on an individual type space (without specifying the type space itself) is equivalent to revealing the player's preference orders when only a subset of the type space and outcomes is available. This theorem is the matching field characterization of tropical hyperplane arrangements \cite[Theorem 4.4, Example 4.6]{fink2015stiefel}, \cite{JoswigLoho}, which dates back to classical results in discrete geometry \cite{babson1998geometry,gelfand2008discriminants,sturmfels1993maximal}. In our paper, it plays a crucial role in establishing the equalities between the set of IC outcome functions in $\calS^1$ and $\calT^1$ in Theorem \ref{thm:main}. Furthermore, it gives intuition on the intricate construction used to prove Theorem~\ref{thm:main.counter}. 

Our paper is organized as follows. In Section \ref{sec:background}, we give a brief introduction to mechanism design via tropical geometry and provide some examples to illustrate Theorem \ref{thm:main}. This approach was first described in \cite{CrowellTran1}. For a comprehensive introduction to either topic, we recommend \cite{vohra2011mechanism} for mechanism design, and \cite{maclagan2015introduction} for tropical geometry. 
In Section \ref{sec:typespaces}, we present the definitions and tools needed for the proofs. We prove Theorems \ref{thm:main} and \ref{thm:main.counter} in Section \ref{sec:proof}, and conclude with discussions and open questions in Section \ref{sec:summary}.
\vskip12pt \noindent
\textbf{Notation.} For an integer $m$, let $[m] = \{1,2,\dots,m\}$. Fixing $m$, let $\mathbf{0}$ be the zero vector in $\R^{m}$. For an outcome function $g: \calT := \calT^1 \times \dots \times \calT^n \to [m]$, since each $\calT^{i}$ is ordered, we also denote $g$ by a $r_{1}\times r_{2}\times \cdots \times r_{n}$-tensor with entries in $[m]$. In particular, $g$ becomes a vector in $[m]^{r_{1}}$ when $n=1$, and $g$ becomes a matrix in $[m]^{r_{1}\times r_{2}}$ when $n=2$. Given type spaces $\calT^{i}$ with type matrices $T^{i}\in \R^{r_{i}\times m}$, and for constants $\alpha_{1},\alpha_{2},\ldots,\alpha_{n}>0$ and a vector $\gamma\in \R^{m}$, we define another type space $M(\alpha_{1}\calT^{1}, \ldots, \alpha_{n}\calT^{n}, \gamma)$ as follows: it is an ordered tuple with $\prod_{i=1}^{n}{r_{i}}$ entries, where the $(i_{1},i_{2},\ldots,i_{n})$-th entry is the vector  
\begin{equation}\label{eqn:tensorcomb}
	\alpha_{1}\calT^{1}_{i_{1}} + \cdots + \alpha_{n}\calT^{n}_{i_{n}} + \gamma \in \R^{m},
\end{equation}
for $1\le i_{j}\le r_{j}, j=1,2,\ldots,n$, and the entries are ordered lexicographically.

\section{Background} \label{sec:background}

\subsection{Background on mechanism design}
First we collect relevant notations and definitions from mechanism design. 
Consider a game with $m \in \mathbb{N}$ outcomes and $n \in \mathbb{N}$ players. For $i = 1, \dots, n$, the type space of a single player $i$ is an ordered tuple $\calT^i$ with $r_{i}$ entries where each entry is a vector in $\R^m$. The multi-player type space $\calT$ is the Cartesian product $\calT := \calT^1 \times \dots \times \calT^n$. A mechanism $(g,p)$ on the type space $\calT$ consists of a function $g: \calT \to [m]$ and a payment function $p: \calT \to \R^n$. Suppose player $i$ has true type $t^i$ in $\calT^i$. The mechanism $(g,p)$ is incentive compatible (IC) if for all $i$, \eqref{eqn:ic} holds. An outcome function $g$ is incentive compatible if there exists a payment function $p$ such that $(g,p)$ is IC. An outcome function $g$ is called an affine maximizer on $\calT$ if there exist weights $\alpha_1, \dots, \alpha_n > 0$, a constant vector $\gamma \in \R^m$, and a single-player IC outcome function $h: M(\alpha_{1}\calT^{1}, \ldots, \alpha_{n}\calT^{n}, \gamma) \to [m]$ such that $g(t^1, \dots, t^n) = h(\alpha_{1}t^{1} + \cdots + \alpha_{n}t^{n} + \gamma)$ for all $(t^{1},t^{2},\ldots,t^{n}) \in \calT$.
Let $\IC(\calT^1, \dots, \calT^n) \subseteq [m]^{\prod_{i=1}^{n}{r_{i}}}$ denote the set of all IC outcome functions on $\calT$, and $\AM(\calT^1, \dots, \calT^n)$ denote the set of all affine maximizers on $\calT$. The type matrix of $\calT$ is the matrix $T \in \R^{r \times m}$, where the row vectors of $T$ are the entries of $\calT$ listed in the same order as $\calT$.

In a single player game $(n = 1)$, we shall drop the superscript $1$ in all notations. In this case, a classical result in mechanism design \cite[\S 3]{vohra2011mechanism} states that $(g,p)$ is IC on $\calT$ if and only if there exists a function $q: [m] \to \R^{n}$ such that
$$ p = q \circ g. $$
Following the convention of this literature, in this case, we call $q$ the payment function and denote it by $p$. Then (\ref{eqn:ic}) becomes
\begin{equation}\label{eqn:ic.2}
 t_{g(t)} - p(g(t)) \geq t_{g(s)} - p(g(s))
\end{equation}
for all pairs $s,t \in \calT$. Note that $g$ is defined on the $r$ entries in $\calT$ and its values are in $[m]$, so for convenience we also write $g$ as a vector in $[m]^{r}$. 

\subsection{Mechanism design via tropical geometry}

The tropical min-plus algebra $(\mathbb{R}\cup \{\infty\}, \underline{\oplus},\odot)$ is defined by $a \underline{\oplus} b := \min(a,b), a \odot b := a + b$. For a point $p \in \R^m$, $j \in [m]$, the \emph{min-plus tropical hyperplane} with apex $p$, denoted $\underline{\mathcal{H}}(p)$, is the set of $z \in \R^m$ such that the minimum in the tropical inner product
\begin{equation}\label{eqn:pz.min}
(-p)^\top \underline{\odot} z = \min\{z_1 - p_1, \ldots, z_m - p_m\}
\end{equation}
is achieved at least twice. We can similarly define the tropical max-plus algebra by replacing $\infty$ with $-\infty$; $\underline{\oplus}$ with $\overline{\oplus}$; $\min(a,b)$ with $\max(a,b)$. 
For $J \subseteq [m]$, the $J$ \emph{closed sector} $\minH_J(p)$ of $\minH(p)$ is the set of $z \in \R^m$ such that the minimum in (\ref{eqn:pz.min}) is achieved at all $j \in J$. When $J=\{j\}$ is a singleton, we also write $\minH_{j}$ for $\minH_{J}$. The collection of non-empty sectors $(\minH_J(p) \neq \emptyset: J \subseteq [m])$ partitions $\R^m$ into a classical polyhedral complex. The \emph{tropical hyperplane arrangement} based on $\calT$, denoted $\minH(\calT)$ is the intersection of the polyhedral complexes formed by the closed sectors of the tropical hyperplanes $(\minH(t^i) : i=1,\ldots,r)$. A vector $h\in [m]^{r}$ is called a \emph{covector} of $\minH(\calT)$ if there exists $z\in \R^{m}$ such that for $1\le i\le r$, 
\[ z\in \minH_{h(i)}(t^i). \]
In other words, $j=h(i)$ attains the minimum of $z_{j}-t^{i}_{j}$ among $1\le j\le m$. The key connection between mechanism design and tropical geometry is the following \cite[Theorem 1]{CrowellTran1}. 

\begin{proposition}\label{prop:sector}
The set of covectors of $\minH(\calT)$ is $\IC(\calT)$.
\end{proposition}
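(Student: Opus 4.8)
The plan is to prove the two inclusions by exhibiting an explicit dictionary between a payment function $p \colon [m] \to \R$ and the witness point $z \in \R^m$, under the identification $z_j = p(j)$. The heart of the matter is the observation that, for a fixed apex $t^i$, the quantity $z_{h(i)} - t^i_{h(i)}$ appearing in the tropical minimum $\min_j(z_j - t^i_j)$ is precisely the negative of the player's profit $t^i_{h(i)} - p(h(i))$ when the true type is $t^i$ and the selected outcome is $h(i)$. Thus \emph{minimizing} the tropical expression is the same as \emph{maximizing} profit, and the covector condition $h(i) \in \argmin_j(z_j - t^i_j)$ becomes, after rearrangement, the family of inequalities
\begin{equation*}
t^i_{h(i)} - p(h(i)) \ge t^i_j - p(j), \qquad i \in [r],\ j \in [m].
\end{equation*}

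For the inclusion (covectors $\subseteq \IC(\calT)$), I would start from a covector $h$ with witness $z$, set $p(j) := z_j$, and read off the inequalities above. Restricting $j$ to the image of $h$, say $j = h(k)$, yields exactly the incentive compatibility inequality \eqref{eqn:ic.2} for the pair $(t^i, t^k)$; since these are all the comparisons that IC requires, the mechanism $(g,p)$ with $g = h$ is IC. This direction is immediate, precisely because the covector condition quantifies over all $j \in [m]$, which is strictly stronger than what IC demands.

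For the reverse inclusion ($\IC(\calT) \subseteq$ covectors), I would start from an IC outcome function $g = h$ with payment function $p \colon [m] \to \R$ and attempt to use $z_j = p(j)$ as the witness. The IC inequalities immediately supply the covector inequalities for every $j$ lying in the image of $h$. The one point requiring care is that IC places no constraint on $p(j)$ for outcomes $j$ that are never realized (never equal to $g(s)$ for any $s$), whereas the covector condition must hold for \emph{all} $j \in [m]$. I would dispose of this by redefining $z_j$ on the unrealized coordinates to be large enough, e.g.\ $z_j \ge \max_{i \in [r]}\bigl(t^i_j + z_{h(i)} - t^i_{h(i)}\bigr)$; the maximum is finite because $\calT$ has finitely many entries, and this choice guarantees $z_j - t^i_j \ge z_{h(i)} - t^i_{h(i)}$ for every $i$, so that $h(i)$ stays a minimizer at $z$. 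With this modification, $z$ witnesses that $h$ is a covector.

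I expect the only genuine subtlety to be exactly this asymmetry between the full outcome set $[m]$, over which the tropical minimum ranges, and the image of $g$, which contains the only outcomes that IC ever compares. Everything else is a direct sign-tracking exercise through the duality $z_j = p(j)$, and it is the finiteness of $\calT$ that makes the large-payment correction available on the unrealized outcomes.
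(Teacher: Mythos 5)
Your proof is correct: the dictionary $z_j = p(j)$ turns the covector condition into the profit-maximization inequalities, and the only gap between the two notions is the quantification over all of $[m]$ versus the image of $g$, which you close by pushing $z_j$ up on unrealized outcomes. The paper itself imports Proposition \ref{prop:sector} from \cite[Theorem 1]{CrowellTran1} without proof, but your argument is essentially identical to the paper's proof of Lemma \ref{lem:ic} (the system \eqref{eqn:typespace} is exactly the covector condition, and the reduction to \eqref{eqn:reducedtypespace} is your large-payment correction), so this is the same approach.
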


We exploit Proposition \ref{prop:sector} here as a powerful visualization technique. Since one can visualize $\minH(\calT)$ for $m = 3$ in $\R^2$, this gives an easy way to list out all IC outcome functions on an individual type space $\calT$, as demonstrated in Example \ref{exa:pos} below.

\begin{example}[Visualization of IC outcome functions with tropical hyperplane arrangements]\label{exa:arrangement}
	Let $r_{1}=r_{2}=3$ and $m=3$. Define
	\begin{equation*}
	T^{1}=\begin{bmatrix}
	0& 2& 3 \\
	0& 4& 2 \\
	0& 3& 7
	\end{bmatrix}, \quad T^{2}=\begin{bmatrix}
	0& 5& -5 \\
	0& -2& 9 \\
	0& 1& 3
	\end{bmatrix}.
	\end{equation*}
Here $T^1$ is the type matrix of the type space $\mathcal{T}^1 = \left((0,2,3), (0,4,2), (0,3,7)\right) \in \left(\R^{3}\right)^{3}$. Similarly, $T^2$ is the type matrix of $\mathcal{T}^2 = \left((0,5,-5), (0,-2,9), (0,1,3)\right) \in \left(\R^{3}\right)^{3}$. Under the map $(x_0,x_1,x_2) \in \R^3 \mapsto (x_1-x_0,x_2-x_0) \in \R^2$, Figure \ref{fig:arrangement} shows the tropical hyperplane $\minH(t^1_3) = \minH(0,3,7)$ together with the three closed sectors $\minH_1(t^1_3), \minH_2(t^1_3)$ and $\minH_3(t^1_3)$, shaded as a visual guide. Figure \ref{fig:arrangement}(b) and (c) shows the tropical hyperplane arrangements $\minH(\calT^{1})$ and $\minH(\calT^{2})$ and covectors corresponding to the open, full-dimensional cells, unshaded for simplicity. 

Now consider a mechanism $g: \mathcal{T}^1 \times \mathcal{T}^2 \to [3]$, represented as the following matrix
$$ g =\begin{bmatrix}
	2& 1& 1 \\
	2& 1& 2 \\
	3& 3& 3
	\end{bmatrix}.$$
This means $g(t^1_1, t^2_3)$ is the $(1,3)$ entry of the matrix, which is $1$, for example. One can check, using Figure \ref{fig:arrangement}, that the columns of $g$ are covectors of $\calT^1$. This means for each $t^2 \in \mathcal{T}^2$, $g(\cdot, t^2): \mathcal{T}^1 \to [3]$ is IC. Similarly, the rows of $g$ are covectors of $\calT^2$. This means for each $t^1 \in \mathcal{T}^1$, $g(t^1, \cdot): \mathcal{T}^2 \to [3]$ is IC. Therefore, by definition, $g$ is a two-player IC outcome function on $\mathcal{T}^1 \times \mathcal{T}^2$.

	\begin{figure}[ht!]
		\begin{minipage}[t]{0.3\textwidth}
			\centering
			\begin{tikzpicture}[scale=0.53]
			\draw[help lines, black!15] (0,0) grid (9,15);
  
  		\filldraw [black!10, fill opacity=0.7] (3.05,7.05) -- (3.05,14.95) -- (8.95,14.95) -- (8.95,7.05);			

  		\filldraw [black!20, fill opacity=0.7] (2.95,6.95) -- (2.95,14.95) -- (0.05,14.95) -- (0.05,4.05) -- (2.95,6.95);			  		
  		
			\filldraw[red] (3,7) circle (2.5pt);
			\draw [red, line width=0.1mm] (0,4) -- (3,7) -- (3,15);
			\draw [red, line width=0.1mm] (3,7) -- (9,7);
			\node[below right] at (3,7) {\color{red} $t^{1}_{3}$};
			\draw[line width=0.1mm] (0,0) rectangle (9,15);  		
			
			\node at (6,11) {\color{red} $\minH_1(t^1_3)$};
			\node at (1.5,9) {\color{red} $\minH_2(t^1_3)$};			
			\node at (5,3) {\color{red} $\minH_3(t^1_3)$};						
			
			\end{tikzpicture}
		\end{minipage}	
		\begin{minipage}[t]{0.33\textwidth}
			\centering
			\begin{tikzpicture}[scale=0.53]
			\draw[help lines, black!15] (0,0) grid (9,15);			
			\node at (7,4) {\small $({\color{blue} 1}{\color{brown} 1}{\color{red} 3})$};
			\node at (3,4) {\small $({\color{blue} 1}{\color{brown} 2}{\color{red} 3})$};
			\node at (1,3.5) {\small $({\color{blue} 2}{\color{brown} 2}{\color{red} 3})$};
			\node[below] at (2,2) {\small $({\color{blue} 3}{\color{brown} 2}{\color{red} 3})$};			
			\node at (7,1) {\small $({\color{blue} 3}{\color{brown} 3}{\color{red} 3})$};						
			\node at (7,2.5) {\small $({\color{blue} 3}{\color{brown} 1}{\color{red} 3})$};									
			\node at (7,8) {\small $({\color{blue} 1}{\color{brown} 1}{\color{red} 1})$};			
			\node at (3.75,9) {\small $({\color{blue} 1}{\color{brown} 2}{\color{red} 1})$};						
			\node at (2.75,11) {\small $({\color{blue} 1}{\color{brown} 2}{\color{red} 2})$};									
			\node at (1,8) {\small $({\color{blue} 2}{\color{brown} 2}{\color{red} 2})$};												
			
			\filldraw[blue] (2,3) circle (2.5pt);
			\draw [blue, line width=0.1mm] (0,1) -- (2,3) -- (2,15);
			\draw [blue, line width=0.1mm] (2,3) -- (9,3);
			\node[below right] at (2,3) {\color{blue} $t^{1}_{1}$};
			
			\filldraw[brown] (4,2) circle (2.5pt);
			\draw [brown, line width=0.1mm] (2,0) -- (4,2) -- (4,15);
			\draw [brown, line width=0.1mm] (4,2) -- (9,2);
			\node[below right] at (4,2) {\color{brown} $t^{1}_{2}$};
			
			\filldraw[red] (3,7) circle (2.5pt);
			\draw [red, line width=0.1mm] (0,4) -- (3,7) -- (3,15);
			\draw [red, line width=0.1mm] (3,7) -- (9,7);
			\node[below right] at (3,7) {\color{red} $t^{1}_{3}$};
			
			\draw[line width=0.1mm] (0,0) rectangle (9,15);
			
			\end{tikzpicture}
		\end{minipage}
		\begin{minipage}[t]{0.33\textwidth}
			\centering
			\begin{tikzpicture}[scale=0.4]
				\draw[help lines, black!15] (-4,-8) grid (8,12);			
			\node at (3,11) {\small $({\color{blue} 2}{\color{brown} 1}{\color{red} 1})$};
			\node at (-0.5,11) {\small $({\color{blue} 2}{\color{brown} 1}{\color{red} 2})$};
			\node at (5,-7) {\small $({\color{blue} 3}{\color{brown} 3}{\color{red} 3})$};

			\node at (0,-2) {\small $({\color{blue} 2}{\color{brown} 3}{\color{red} 3})$};
			\node at (-2,4) {\small $({\color{blue} 2}{\color{brown} 3}{\color{red} 2})$};			
			\node[below] at (-2.5,10.5) {\small $({\color{blue} 2}{\color{brown} 2}{\color{red} 2})$};			
			\node at (3,6) {\small $({\color{blue} 2}{\color{brown} 3}{\color{red} 1})$};			
			\node at (6.5,6) {\small $({\color{blue} 1}{\color{brown} 3}{\color{red} 1})$};						
			\node at (6.5,-2) {\small $({\color{blue} 1}{\color{brown} 3}{\color{red} 3})$};			
			\node at (6.5,11) {\small $({\color{blue} 1}{\color{brown} 1}{\color{red} 1})$};									
			
			\filldraw[blue] (5,-5) circle (2.5pt);
			\draw [blue, line width=0.1mm] (2,-8) -- (5,-5) -- (5,12);
			\draw [blue, line width=0.1mm] (5,-5) -- (8,-5);
			\node[above left] at (5,-5) {\color{blue} $t^{2}_{1}$};
			
			\filldraw[brown] (-2,9) circle (2.5pt);
			\draw [brown, line width=0.1mm] (-4,7) -- (-2,9) -- (-2,12);
			\draw [brown, line width=0.1mm] (-2,9) -- (8,9);
			\node[below right] at (-2,9) {\color{brown} $t^{2}_{2}$};
			
			\filldraw[red] (1,3) circle (2.5pt);
			\draw [red, line width=0.1mm] (-4,-2) -- (1,3) -- (1,12);
			\draw [red, line width=0.1mm] (1,3) -- (8,3);
			\node[above left] at (1,3) {\color{red} $t^{2}_{3}$};
			
			\draw[line width=0.1mm] (-4,-8) rectangle (8,12);
			
			\end{tikzpicture}
		\end{minipage}
			\caption{Figures accompany Example \ref{exa:arrangement}. From left to right: the tropical hyperplane at $t_3^1$ and its three closed sectors $\minH_1(t^1_3), \minH_2(t^1_3)$ and $\minH_3(t^1_3)$; the tropical hyperplane arrangement $\minH(\calT^{1})$ and the tropical hyperplane arrangement $\minH(\calT^{2})$, respectively, together with the covectors corresponding to their full-dimensional cells. Figures are drawn to scale, but the coordinates are not shown for simplicity.}
		\label{fig:arrangement}
	\end{figure}
\end{example}

\begin{example}[A demonstration of Theorem \ref{thm:main}]\label{exa:pos}
Continue from Example \ref{exa:arrangement}, we saw that $g \in \IC(\mathcal{T}^1,\mathcal{T}^2)$. However, we claim that $g \notin \AM(\mathcal{T}^1,\mathcal{T}^2)$. Suppose there exist $\alpha_{1},\alpha_{2}>0$ and $\gamma \in \R$ such that $g\in \IC(M(\alpha_{1}\mathcal{T}^{1},\alpha_{2}\mathcal{T}^{2},\gamma))$. Then by Lemma \ref{lem:am}, there exists $z\in \R^{3}$ such that for $1\le i,j,k\le 3$ and $k\ne g(i,j)$,

\begin{equation*}
	\alpha_{1}T^{1}_{i,g(i,j)}+\alpha_{2}T^{2}_{j,g(i,j)}+\gamma-z_{g(i,j)} \ge \alpha_{1}T^{1}_{i,k}+\alpha_{2}T^{2}_{j,k}+\gamma-z_{k}.
\end{equation*}

So we have
\vskip-1cm
\begin{align}
	z_{1}-z_{2} &\ge -2\alpha_{1}-5\alpha_{2} \quad (i=1,j=1,g(i,j)=2,k=1); \label{eqn:1-z111} \\
	z_{3}-z_{1} &\ge 3\alpha_{1}+9\alpha_{2} \quad (i=1,j=2,g(i,j)=1,k=3); \label{eqn:1-z123} \\
	z_{2}-z_{1} &\ge 4\alpha_{1}-2\alpha_{2} \quad (i=2,j=2,g(i,j)=1,k=2); \label{eqn:1-z222} \\
	z_{2}-z_{3} &\ge -4\alpha_{1}+10\alpha_{2} \quad (i=3,j=1,g(i,j)=3,k=2). \label{eqn:1-z312}
\end{align}

But $3(\ref{eqn:1-z111})+(\ref{eqn:1-z123})+2(\ref{eqn:1-z222})+(\ref{eqn:1-z312})$ gives $0\ge \alpha_{1}$, a contradiction! So $g \notin \AM(\mathcal{T}^1,\mathcal{T}^2)$.
This means $g \notin \IC(M(\alpha_{1}\mathcal{T}^{1},\alpha_{2}\mathcal{T}^{2},\gamma))$ for any positive constants $\alpha_1,\alpha_2 > 0$ and $\gamma\in \R^{3}$. Figure \ref{fig:T1+T2} (left) gives a visual proof that $g \notin \IC(M(\mathcal{T}^1,\mathcal{T}^2,\mathbf{0}))$. 

By Theorem \ref{thm:main}, there exists another pair of type spaces $\mathcal{S}^1,\mathcal{S}^2$ such that $\IC(\mathcal{S}^1) = \IC(\mathcal{T}^1)$, and $g\in \AM(\mathcal{S}^{1},\mathcal{S}^{2})$. In this example, we can take $\mathcal{S}^{1}=\mathcal{T}^{1}$, and let $\mathcal{S}^2$ be the type space with type matrix
	\begin{equation*}
		S^{2}=\begin{bmatrix}
		0& 5& 4.3 \\
		0& -2& 0.5 \\
		0& 1& 3
		\end{bmatrix}.
	\end{equation*}
For convenience, we let $\calC$ be the type space $M(\mathcal{T}^1 ,\mathcal{T}^2,\mathbf{0})$ and $\calC'$ be the type space $ M(\mathcal{T}^1 ,\mathcal{S}^2,\mathbf{0})$. Then the type matrix $C$ ($C'$) of $\calC$ ($\calC'$) is a $9\times 3$ matrix whose $(i,j)$-th row is the sum of the $i$-th row of $T^{1}$ and the $j$-th row of $T^{2}$ ($S^{2}$). Figure \ref{fig:T1+T2} (right) shows that $g\in \IC(\calC')$ and thus $g\in \AM(\calS^{1},\calS^{2})$. 

	\begin{figure}[ht!]
		\begin{minipage}[t]{0.48\textwidth}
			\vspace{0pt}
			\centering
			\begin{tikzpicture}[scale=0.5]
				\filldraw[black!15, fill opacity = 0.7] (0,12) -- (0,17) -- (10,17) -- (10,12) -- (0,12) ; 
				
				\filldraw[black!15, fill opacity = 0.7] (8,2) -- (10,2) -- (10,-4) -- (2,-4) -- (8,2);
				
				\filldraw[gray] (7,-2) circle (2.5pt);
				\draw [gray,line width=0.1mm] (5,-4) -- (7,-2) -- (7,17);
				\draw [gray,line width=0.1mm] (7,-2) -- (10,-2);
				\node[below] at (7,-2) {\color{gray}$C_{1,1}$};
				
				\filldraw (0,12) circle (2.5pt);
				\draw [line width=0.1mm] (-2,10) -- (0,12) -- (0,17);
				\draw [line width=0.1mm] (0,12) -- (10,12);
				\node[below] at (0,12) {$C_{1,2}$};
				
				\filldraw[gray] (3,6) circle (2.5pt);
				\draw [gray,line width=0.1mm] (-2,1) -- (3,6) -- (3,17);
				\draw [gray,line width=0.1mm] (3,6) -- (10,6);
				\node[below] at (3,6) {\color{gray}$C_{1,3}$};
				
				\filldraw[gray] (9,-3) circle (2.5pt);
				\draw [gray,line width=0.1mm] (8,-4) -- (9,-3) -- (9,17);
				\draw [gray,line width=0.1mm] (9,-3) -- (10,-3);
				\node[below] at (9,-3) {\color{gray}$C_{2,1}$};
				
				\filldraw[gray] (2,11) circle (2.5pt);
				\draw [gray,line width=0.1mm] (-2,7) -- (2,11) -- (2,17);
				\draw [gray,line width=0.1mm] (2,11) -- (10,11);
				\node[below] at (2,11) {\color{gray}$C_{2,2}$};
				
				\filldraw[gray] (5,5) circle (2.5pt);
				\draw [gray,line width=0.1mm] (-2,-2) -- (5,5) -- (5,17);
				\draw [gray,line width=0.1mm] (5,5) -- (10,5);
				\node[below] at (5,5) { \color{gray} $C_{2,3}$};
				
				\filldraw (8,2) circle (2.5pt);
				\draw [line width=0.1mm] (2,-4) -- (8,2) -- (8,17);
				\draw [line width=0.1mm] (8,2) -- (10,2);
				\node[below] at (8,2) {$C_{3,1}$};
				
				\filldraw[gray] (1,16) circle (2.5pt);
				\draw [gray, line width=0.1mm] (-2,13) -- (1,16) -- (1,17);
				\draw [gray, line width=0.1mm] (1,16) -- (10,16);
				\node[below] at (1,16) {\color{gray} $C_{3,2}$};
				
				\filldraw[gray] (4,10) circle (2.5pt);
				\draw [gray, line width=0.1mm] (-2,4) -- (4,10) -- (4,17);
				\draw [gray, line width=0.1mm] (4,10) -- (10,10);
				\node[below] at (4,10) {\color{gray} $C_{3,3}$};
				
				\draw[line width=0.1mm] (-2,-4) rectangle (10,17);
			\end{tikzpicture}
		\end{minipage}
		\begin{minipage}[t]{0.48\textwidth}
			\vspace{0pt}
			\centering
			\begin{tikzpicture}[scale=0.5]
				\filldraw[black!15, fill opacity = 0.7] (7,3.3) -- (7,3.5) -- (5,3.5) -- (5,2) -- (5.7,2) -- (7,3.3);
				
				\filldraw[gray] (7,3.3) circle (2.5pt);
				\draw [line width=0.1mm] (-0.3,-4) -- (7,3.3) -- (7,17);
				\draw [line width=0.1mm] (7,3.3) -- (10,3.3);
				\node[above right] at (7,3.3) {\color{gray} $C'_{1,1}$};
				
				\filldraw[gray] (0,-0.5) circle (2.5pt);
				\draw [line width=0.1mm] (-2,-2.5) -- (0,-0.5) -- (0,17);
				\draw [line width=0.1mm] (0,-0.5) -- (10,-0.5);
				\node[left] at (0,-0.5) {\color{gray} $C'_{1,2}$};
				
				\filldraw[gray] (3,2) circle (2.5pt);
				\draw [line width=0.1mm] (-2,-3) -- (3,2) -- (3,17);
				\draw [line width=0.1mm] (3,2) -- (10,2);
				\node[below] at (3,2) {\color{gray} $C'_{1,3}$};
				
				\filldraw[gray] (9,2.3) circle (2.5pt);
				\draw [line width=0.1mm] (2.3,-4) -- (9,2.3) -- (9,17);
				\draw [line width=0.1mm] (9,2.3) -- (10,2.3);
				\node[below] at (9,2.3) {\color{gray} $C'_{2,1}$};
				
				\filldraw[gray] (2,-1.5) circle (2.5pt);
				\draw [line width=0.1mm] (-0.5,-4) -- (2,-1.5) -- (2,17);
				\draw [line width=0.1mm] (2,-1.5) -- (10,-1.5);
				\node[below right] at (2,-1.5) {\color{gray} $C'_{2,2}$};
				
				\filldraw[gray] (5,1) circle (2.5pt);
				\draw [line width=0.1mm] (0,-4) -- (5,1) -- (5,17);
				\draw [line width=0.1mm] (5,1) -- (10,1);
				\node[below right] at (5,1) {\color{gray} $C'_{2,3}$};
				
				\filldraw[gray] (8,7.3) circle (2.5pt);
				\draw [line width=0.1mm] (-2,-2.7) -- (8,7.3) -- (8,17);
				\draw [line width=0.1mm] (8,7.3) -- (10,7.3);
				\node[above right] at (8,7.3) {\color{gray} $C'_{3,1}$};
				
				\filldraw[gray] (1,3.5) circle (2.5pt);
				\draw [line width=0.1mm] (-2,0.5) -- (1,3.5) -- (1,17);
				\draw [line width=0.1mm] (1,3.5) -- (10,3.5);
				\node[above right] at (1,3.5) {\color{gray} $C'_{3,2}$};
				
				\filldraw[gray] (4,6) circle (2.5pt);
				\draw [line width=0.1mm] (-2,0) -- (4,6) -- (4,17);
				\draw [line width=0.1mm] (4,6) -- (10,6);
				\node[above right] at (4,6) {\color{gray} $C'_{3,3}$};
				
				\draw[line width=0.1mm] (-2,-4) rectangle (10,17);
			\end{tikzpicture}
		\end{minipage}
		\caption{The left figure is the hyperplane arrangement obtained from $\calC = M(\mathcal{T}^1,\mathcal{T}^2,\mathbf{0})$ for $\mathcal{T}^1,\mathcal{T}^2$ in Example \ref{exa:arrangement}.
		Here $C_{i,j} = t^1_i + t^2_j$ for $i,j = 1, \dots 3$. Note that the closed sectors $\minH_1(t^1_1 + t^2_2)$ and $\minH_3(t^1_3 + t^2_1)$ are disjoint, and thus no $h \in \IC(\calC)$ can simultaneously have $h(1,2)=1$ and $h(3,1)=3$, while $g(1,2)=1$ and $g(3,1)=3$. Therefore, $g \notin \IC(\calC)$. The right figure shows that $g\in \IC(\calC')$ as the shaded sector has cell type $g$ - for any point $p$ in the closed shaded sector and $1\le i,j\le 3$, $p\in \minH_{g(i,j)}(t^{1}_{i}+t^{2}_{j})$.}\label{fig:T1+T2}
	\end{figure}
\end{example}

\section{Type spaces and incentive compatible outcome functions}\label{sec:typespaces}
We now state our main technical results, Proposition \ref{prop:minordet}, Theorem \ref{thm:equalIC}. In order, they solve the following problems: 
\begin{enumerate}
  \item Given an outcome function $g$ and a single-player type space $\T$, decide if $g \in \IC(\T)$.
  \item Given single-player type space $\T^1,\T^2$, decide if $\IC(\T^1) = \IC(T^2)$.
\end{enumerate}
The characterizations are given based on intrinsic properties of the tropical determinant of the corresponding type matrices. All of these results play important roles in the proof of Theorem \ref{thm:main}.

\subsection{Single-player case}

\begin{lemma}\label{lem:ic}
Let $g \in [m]^r$. Then $g \in \IC(\calT)$ for some type space $\calT$ with type matrix $T \in \R^{r \times m}$ if and only if the following system of linear inequalities in variables $x = (x_{k}) \in \R^{m}$ and parameters $T = (T_{i,k})\in \R^{r \times m}$ is feasible
\begin{equation}\label{eqn:typespace}
	T_{i,g(i)}-x_{g(i)} \geq T_{i,k}-x_{k} \quad \forall 1\le i\le r, 1\le k\le m, k \ne g(i).
\end{equation}
In particular, the set of inequalities above is equivalent to
	\begin{equation}\label{eqn:reducedtypespace}
		T_{i,g(i)}-x_{g(i)} \geq T_{i,k}-x_{k} \quad \forall 1\le i\le r, 1\le k\le m, k\notin \{g(j)\mid 1\le j\le r \}.
	\end{equation}
\end{lemma}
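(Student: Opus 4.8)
The plan is to deduce both assertions by unpacking the covector characterization of $\IC(\calT)$ furnished by Proposition \ref{prop:sector} into the linear system, and then exploiting the freedom in the entries of $T$ that never occur in the relevant inequalities.

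First I would fix a type matrix $T$ and translate ``$g \in \IC(\calT)$'' directly into \eqref{eqn:typespace}. By Proposition \ref{prop:sector}, $g \in \IC(\calT)$ exactly when $g$ is a covector of $\minH(\calT)$, that is, when there is some $x \in \R^m$ such that for every row $i$ the index $g(i)$ attains $\min_{1 \le k \le m}(x_k - T_{i,k})$. The inequalities $x_{g(i)} - T_{i,g(i)} \le x_k - T_{i,k}$ expressing this minimality are, after rearranging, precisely the inequalities \eqref{eqn:typespace}, the case $k = g(i)$ being a trivial equality. Hence for each fixed $T$ we have $g \in \IC(\calT)$ iff \eqref{eqn:typespace} is solvable in $x$; quantifying existentially over $T$ then yields that ``$g \in \IC(\calT)$ for some $\calT$'' holds iff \eqref{eqn:typespace} is feasible in the joint unknowns $(x,T)$, which is the asserted equivalence. (Alternatively one can argue from the classical single-player condition \eqref{eqn:ic.2} with payment $x = p$, at the cost of separately filling in the columns outside the image of $g$, exactly as in the next step.)

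For the ``in particular'' reduction I would show that \eqref{eqn:typespace} and \eqref{eqn:reducedtypespace} are feasibility-equivalent in $(x,T)$. One direction is immediate: \eqref{eqn:reducedtypespace} is a subsystem of \eqref{eqn:typespace}, so every solution of the latter solves the former. For the converse, set $I = \{g(j) : 1 \le j \le r\}$ and record the bookkeeping fact that, for $k \in I \setminus \{g(i)\}$, the entry $T_{i,k}$ occurs in no inequality of \eqref{eqn:reducedtypespace}: on each row $i$ the left-hand side uses only column $g(i)$, while the right-hand sides use only columns $k \notin I$. Given a solution $(x,T)$ of \eqref{eqn:reducedtypespace}, I would keep $x$ together with every column indexed by $I$ fixed and replace each free entry $T_{i,k}$ with $k \in I \setminus \{g(i)\}$ by a sufficiently negative number. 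This leaves all inequalities of \eqref{eqn:reducedtypespace} intact and forces the remaining inequalities $T_{i,g(i)} - x_{g(i)} \ge T_{i,k} - x_k$ of \eqref{eqn:typespace} to hold, since their right-hand sides are driven arbitrarily low. Thus feasibility of \eqref{eqn:reducedtypespace} implies feasibility of \eqref{eqn:typespace}.

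The argument is elementary, so there is no deep obstacle; the one point needing care is the bookkeeping in the last step. I must verify both that each freed entry $T_{i,k}$ appears solely on the right-hand side of the single inequality indexed by $(i,k)$ and never on any left-hand side, so that lowering it can only relax constraints, and that these entries are genuinely absent from \eqref{eqn:reducedtypespace}, so that lowering them preserves the given solution. The degenerate cases are a useful sanity check: when $g$ is constant the two index sets $\{k : k \ne g(i)\}$ and $\{k : k \notin I\}$ coincide so the systems are literally equal, and when $g$ is surjective \eqref{eqn:reducedtypespace} is vacuous while every off-index entry of each row is free, both consistent with the claim.
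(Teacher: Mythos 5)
Your first step -- the equivalence of $g \in \IC(\calT)$ with the feasibility of \eqref{eqn:typespace} in $x$ for a \emph{fixed} type matrix $T$ -- is fine and essentially matches the paper, which argues via the payment-function form \eqref{eqn:ic.2} rather than via Proposition \ref{prop:sector}; the two routes are interchangeable. The problem is the ``in particular'' step. You read \eqref{eqn:reducedtypespace} literally, with the retained inequalities indexed by $k \notin \{g(j)\}$, and to prove the converse you lower the entries $T_{i,k}$ for $k$ in the range of $g$. That move changes the type matrix, hence the type space $\calT$ itself, so at best you show that some \emph{other} type space makes \eqref{eqn:typespace} feasible. Throughout the paper the lemma is applied with $T$ held fixed and only $x$ varying (see \eqref{eqn:goodnonstrict}, and the step in the proof of Theorem \ref{thm:equalIC} where one checks \eqref{eqn:reducedtypespace} for $i \in [r]-I$ and $k \in J$ with $J$ equal to the range of $g$), and for fixed $T$ the literal statement you are proving is false: in the subsystem indexed by $k \notin \{g(j)\}$ each such $x_k$ occurs only on a right-hand side, so that subsystem is \emph{always} feasible (send those $x_k$ to $+\infty$), while \eqref{eqn:typespace} is feasible only when $g \in \IC(\calT)$. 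Conversely, if one genuinely lets $T$ vary as you do, then \eqref{eqn:typespace} itself is always feasible (take $T=0$, $x=0$) and the lemma becomes vacuous.

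The displayed condition in \eqref{eqn:reducedtypespace} contains a typo: it should read $k \in \{g(j) \mid 1 \le j \le r\}$ (with $k \ne g(i)$). The intended reduction, which the paper's proof actually gives, is the mirror image of yours and requires no change to $T$: for every $k$ outside the range of $g$ the variable $x_k$ occurs only on right-hand sides of \eqref{eqn:typespace}, so after solving the subsystem indexed by $k$ in the range of $g$ one may choose the remaining $x_k$ large enough to satisfy all discarded inequalities. Your own sanity check for surjective $g$ already exposes the issue: there your literal reduced system is empty, yet $g \in \IC(\calT)$ is a genuine restriction on a fixed $T$.
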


\begin{proof}
	Suppose $x = (x_{1},x_{2},\ldots,x_{m})$ is a feasible solution of (\ref{eqn:typespace}). We define a payment function $p: [m] \to \R$ by $p(j)=x_{j}$ for $1\le j\le m$. For any $t,s\in \calT$, there exists indices $1\le i,j\le r$ such that $\calT_{i}=t$ and $\calT_{j}=s$. Then (\ref{eqn:ic.2}) becomes
	
	\[T_{i,g(i)} - x_{g(i)} \geq T_{i,g(j)} - x_{g(j)}, \]
	
	which is exactly (\ref{eqn:typespace}) when taking $k=g(j)$.
	Conversely, if $g \in \IC(\calT)$, by definition there exists a payment function $p:[m]\to \R$ satisfying (\ref{eqn:ic.2}). Then $x_{j}=p(j)$ gives a solution to the system (\ref{eqn:typespace}) and thus it is feasible. Finally, note that for all $k$ not in the range of $g$, $x_{k}$ can only appear in the right hand side of (\ref{eqn:typespace}), thus after assigning values for all other $x$ variables, we can always assign sufficiently large values to those $x_{k}$. Hence we may delete all inequalities involving such $k$ to obtain the linear system \eqref{eqn:reducedtypespace}.
\end{proof}

We would like to characterize the feasibility of linear systems like (\ref{eqn:reducedtypespace}) only by properties of the entries of the matrix $T$. To make statements clear (for example Theorem \ref{thm:equalIC}), in the following definition we prefer to use the max-plus arithmetic instead of the min-plus arithmetic. 

\begin{definition}[Tropical determinant of a matrix]
The max-plus tropical determinant of a $k \times k$ matrix $T$ is the usual determinant with arithmetic carried out in the max-plus algebra, that is,
\begin{equation}\label{eqn:tdet} 
\tdet(T) = \overline{\bigoplus}_{\sigma \in \mathbb{S}_k} \bigodot_{j=1}^k T_{j,\sigma(j)} = \max_{\sigma\in\mathbb{S}_k} \sum_{j=1}^k T_{j,\sigma(j)},
\end{equation}
where $\mathbb{S}_k$ is the permutation group on $k$ letters.
\end{definition}

The following result was found by Joswig and Loho \cite{JoswigLoho}[Proposition 37].  For completeness and ease of reference, we restate it here. 
\begin{proposition}[\cite{JoswigLoho}]\label{prop:minordet}
	Let $\calT$ be a type space with type matrix $T \in \R^{r \times m}$ and $g\in [m]^{r}$. For any family $I$ of indices $1\le i_{1}<i_{2}<\cdots < i_{l}\le r$ such that $g(i_{1}), g(i_{2}), \cdots, g(i_{l})$ are distinct elements in $[m]$, there is an $l\times l$ minor $T_{I}$ of $T$ with row indices in $I$ and column indices in $J$, where $j_{z}=g(i_{z})$ for $1\le z\le l$ and $J=\{j_{1},j_{2},\cdots,j_{l}\}$. Then $g\in \IC(\calT)$ if and only if for all such $I$,  the tropical product
	\[\bigodot_{z=1}^{l} T_{i_{z},j_{z}} = \sum_{z=1}^{l}{T_{i_{z},j_{z}}} \]
	is the max-plus tropical determinant of $T_{I}$.
\end{proposition}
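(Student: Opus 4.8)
The plan is to combine Lemma~\ref{lem:ic} with the classical feasibility criterion for systems of difference constraints, and then to match the ``no negative cycle'' condition against the statement that the diagonal term of every relevant minor realizes its tropical determinant.

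First I would invoke Lemma~\ref{lem:ic}: $g \in \IC(\calT)$ if and only if the system \eqref{eqn:typespace} is feasible. Rewriting each inequality $T_{i,g(i)} - x_{g(i)} \ge T_{i,k} - x_k$ as the difference constraint
\[
x_{g(i)} - x_{k} \le T_{i,g(i)} - T_{i,k}, \qquad 1\le i\le r,\ k\ne g(i),
\]
I would build a weighted directed graph $G$ on the vertex set $[m]$ of outcomes, placing, for every row $i$ and every column $k \ne g(i)$, an edge $k \to g(i)$ of weight $w_{i,k} := T_{i,g(i)} - T_{i,k}$. Since a column $k\notin\{g(j): 1\le j\le r\}$ is never of the form $g(i)$, it is never the head of an edge of $G$ and hence cannot lie on a directed cycle; thus every cycle of $G$ lives among the image columns (the ``diagonal'' columns $j_z=g(i_z)$). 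The classical theorem on systems of difference constraints---solved by shortest-path potentials from an auxiliary source, with infeasibility obstructed exactly by a negative cycle---then gives: \eqref{eqn:typespace} is feasible if and only if $G$ has no directed cycle of negative total weight, equivalently no negative-weight \emph{simple} cycle.

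Next I would set up a correspondence between simple cycles of $G$ and pairs $(I,\tau)$, where $I=\{i_1<\cdots<i_l\}$ is a family of rows with distinct images $j_z=g(i_z)$ and $\tau$ is a cyclic permutation of $\{1,\dots,l\}$. A simple cycle visits distinct columns $c_0\to c_1\to\cdots\to c_{p-1}\to c_0$; the edge into $c_{t+1}$ comes from some row $\rho_t$ with $g(\rho_t)=c_{t+1}$, and as the heads $c_{t+1}$ are distinct the rows $\rho_t$ are distinct with distinct images, so they form a valid family $I$, while the cyclic order records $\tau$. Conversely, given $(I,\tau)$, choosing for each $t$ the edge $j_{\tau(t)}\to j_t$ supplied by row $i_t$ (legitimate since $\tau(t)\ne t$, so $j_{\tau(t)}\ne g(i_t)$) yields a simple cycle. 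Under this correspondence the cycle weight equals
\[
\sum_{t} w_{i_t,\, j_{\tau(t)}} = \sum_{t}\bigl(T_{i_t,j_t}-T_{i_t,j_{\tau(t)}}\bigr) = \sum_{z=1}^{l} T_{i_z,j_z} - \sum_{z=1}^{l} T_{i_z,j_{\tau(z)}},
\]
so the cycle is non-negative precisely when $\sum_z T_{i_z,j_z}\ge\sum_z T_{i_z,j_{\tau(z)}}$, i.e.\ the diagonal beats the cyclic permutation $\tau$ on the minor $T_I$.

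Finally I would assemble the equivalence. Ranging over all simple cycles, ``$G$ has no negative cycle'' is exactly ``for every valid family $I$ and every cyclic $\tau$, the diagonal term dominates the $\tau$-term of $T_I$''. To upgrade from cyclic permutations to all of $\mathbb{S}_l$, I would use that any $\pi\in\mathbb{S}_l$ decomposes into disjoint cycles supported on subfamilies of $I$ (each still with distinct images, hence a valid family), with fixed points contributing equally to both sides; summing the per-cycle inequalities gives $\sum_z T_{i_z,j_z}\ge\sum_z T_{i_z,j_{\pi(z)}}$ for all $\pi$, which is precisely the assertion that $\bigodot_z T_{i_z,j_z}=\sum_z T_{i_z,j_z}$ equals $\tdet(T_I)$; the reverse direction is immediate since a cyclic permutation is a special permutation. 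Chaining these equivalences proves the proposition. I expect the main obstacle to be the combinatorial bookkeeping of the cycle--minor correspondence---keeping the edge orientations and weight signs consistent (the max-plus convention matches the direction of the IC inequalities, so no sign flip is needed), and verifying that restricting the tropical-determinant condition to single-cycle permutations loses nothing via the cycle decomposition of an arbitrary permutation; the underlying feasibility criterion for difference constraints is classical and may simply be cited.
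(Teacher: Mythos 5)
Your proof is correct. Note, however, that the paper does not prove Proposition \ref{prop:minordet} at all: it restates the result and cites it to Joswig--Loho, so there is no internal proof to compare against. What you have written is a valid self-contained derivation along the standard lines: reduce to feasibility of the system \eqref{eqn:typespace} via Lemma \ref{lem:ic}, encode it as a system of difference constraints on a weighted digraph over the outcomes, invoke the classical ``feasible iff no negative directed cycle'' criterion, and then translate simple cycles into pairs $(I,\tau)$ with $\tau$ a single cycle. The two delicate points are both handled: (a) the cycle weight computes to $\sum_z T_{i_z,j_z}-\sum_z T_{i_z,j_{\tau(z)}}$ with the correct sign (no flip is needed because the max-plus convention aligns with the direction of the IC inequalities), and (b) the upgrade from single-cycle permutations to all of $\mathbb{S}_l$ goes through the cycle decomposition, where you correctly observe that each nontrivial cycle of $\pi$ is supported on a \emph{subfamily} of $I$ that is itself a valid family with distinct images, so the cyclic inequalities for subfamilies suffice. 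This is essentially the argument in the cited reference, and it is also the same ``sum inequalities around a cycle'' mechanism the paper deploys explicitly in the proofs of Theorem \ref{thm:equalIC} and the counterexamples, so your proof fills in a citation rather than diverging from the paper's methods.
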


\begin{theorem}\label{thm:equalIC}
	Let $\calT^{1},\calT^{2}$ be type spaces with type matrix $T^{1},T^{2} \in \R^{r \times m}$, respectively. Then $\IC(\calT^{1})=\IC(\calT^{2})$ if and only if for every pair of index sets $I\subseteq [r]$ and $J\subseteq [m]$ with $|I|=|J|$, the sets of permutations that attain the max-plus tropical determinant are the same for the minors $T^{1}_{I,J}$ and $T^{2}_{I,J}$, where $T_{P,Q}$ denoted the minor of a matrix $T$ with row indices in $P$ and column indices in $Q$.
\end{theorem}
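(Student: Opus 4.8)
The plan is to read both sides of the equivalence through Proposition \ref{prop:minordet}. For a type matrix $T$ and index sets $I\subseteq[r]$, $J\subseteq[m]$ with $|I|=|J|$, call a bijection $\sigma\colon I\to J$ \emph{optimal} for $T_{I,J}$ if $\sum_{i\in I}T_{i,\sigma(i)}=\tdet(T_{I,J})$, and let $\mathrm{Opt}(T,I,J)$ be the set of such bijections (after fixing the increasing orderings of $I$ and $J$ these are exactly the permutations attaining the tropical determinant). Proposition \ref{prop:minordet} states precisely that $g\in\IC(\calT)$ if and only if, for every $I$ on which $g$ is injective, the diagonal $g|_I$ lies in $\mathrm{Opt}(T,I,g(I))$. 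Hence membership in $\IC(\calT)$ is a Boolean function of the family $\{\mathrm{Opt}(T,I,J)\}_{I,J}$ alone, and the ($\Leftarrow$) direction is immediate: if $\mathrm{Opt}(T^1,I,J)=\mathrm{Opt}(T^2,I,J)$ for all $(I,J)$, then for every $g$ the defining conditions for $g\in\IC(\calT^1)$ and $g\in\IC(\calT^2)$ coincide term by term, so $\IC(\calT^1)=\IC(\calT^2)$.

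For ($\Rightarrow$) I would argue by contrapositive. Suppose some minor has $\mathrm{Opt}(T^1,I,J)\neq\mathrm{Opt}(T^2,I,J)$; after possibly swapping the roles of $T^1$ and $T^2$, fix a bijection $\sigma\colon I\to J$ optimal for $T^1_{I,J}$ but not for $T^2_{I,J}$. The goal is a single outcome function $g\in[m]^r$ with $g|_I=\sigma$ that lies in $\IC(\calT^1)$ but not in $\IC(\calT^2)$. The second requirement is free: since $g|_I=\sigma$ is injective and $\sigma\notin\mathrm{Opt}(T^2,I,J)$, Proposition \ref{prop:minordet} applied to the injective family $I$ already forces $g\notin\IC(\calT^2)$, regardless of how $g$ is extended off $I$.

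The substance is the extension. By Lemma \ref{lem:ic}, $g\in\IC(\calT^1)$ is equivalent to exhibiting a price vector $x\in\R^m$ for which every $g(i)$ is a best response, i.e. $g(i)\in\argmax_k\,(T^1_{i,k}-x_k)$. Here I would invoke linear-programming duality for the assignment problem: because $\sigma$ is a maximum-weight perfect matching on $T^1_{I,J}$, there exist dual potentials $(u_i)_{i\in I}$ and $(v_j)_{j\in J}$ with $u_i+v_j\geq T^1_{i,j}$ for all $i\in I,\ j\in J$, and $u_i+v_{\sigma(i)}=T^1_{i,\sigma(i)}$. Setting $x_j=v_j$ for $j\in J$ makes $\sigma(i)$ a best response among the columns of $J$ for each $i\in I$, and setting $x_k$ sufficiently large for $k\notin J$ (finitely many rows, so a uniform choice exists) rules out every column outside $J$ for every row. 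With this $x$ fixed I then \emph{define} $g(i)$, for $i\notin I$, to be any maximizer of $T^1_{i,k}-x_k$ (necessarily in $J$), and verify that $\sigma(i)$ remains a global maximizer for $i\in I$. By construction $x$ certifies $g\in\IC(\calT^1)$ through Lemma \ref{lem:ic}, while $g|_I=\sigma$ gives $g\notin\IC(\calT^2)$, completing the contrapositive.

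I expect this extension step to be the sole real obstacle. The tempting shortcut of extending $\sigma$ by simply reusing a column of $J$ can create spurious injective families of $g$ whose diagonals need not be optimal for $T^1$, so a purely combinatorial extension breaks. Routing the extension through one globally consistent price vector produced by assignment-LP duality is exactly what makes all rows best-respond simultaneously, and hence keeps $g$ inside $\IC(\calT^1)$ while still witnessing failure on $T^2$.
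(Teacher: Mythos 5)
Your proof is correct and follows essentially the same route as the paper's: the forward direction is read off from Proposition \ref{prop:minordet}, and the converse is proved by contrapositive, extending the optimal bijection $\sigma$ on $I\times J$ to a full outcome function by choosing, for each row outside $I$, a best response to a single price vector that is large off $J$. The only (cosmetic) difference is that you obtain the potentials on $J$ from assignment-LP duality, whereas the paper gets the same price vector by applying Proposition \ref{prop:minordet} and Lemma \ref{lem:ic} to the square submatrix $T^1_{I,J}$ after observing that all its principal minors have the identity permutation optimal.
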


\begin{proof}
	Suppose $T^{1}_{I,J}$ and $T^{2}_{I,J}$ have the same set of permutations that attain the max-plus tropical determinant for all such $I$ and $J$. For any particular $g\in [m]^{r}$, the condition in Proposition \ref{prop:minordet} either holds for both $T^{1}$ and $T^{2}$, or does not hold for neither of $T^{1}$ and $T^{2}$. Hence by Proposition \ref{prop:minordet}, $g\in \IC(\calT^{1})$ if and only if $g\in \IC(\calT^{2})$, and thus $\IC(\calT^{1})=\IC(\calT^{2})$. The "if" part is done.
	
	Conversely, suppose $T^{1}_{I,J}$ and $T^{2}_{I,J}$ do not have the same set of permutations that attain the max-plus tropical determinant for all such $I$ and $J$, we need to show that $\IC(\calT^{1})\ne \IC(\calT^{2})$. There exists a pair of index sets $I=\{i_{1},\cdots,i_{l}\}$ and $J=\{j_{1},\cdots,j_{l}\}$ and a permutation on $J$ such that it corresponds to the max-plus tropical determinant in one of the minors of $T^{1}$ and $T^{2}$ but not in the other. We may assume that $\sum_{z=1}^{l}{T^{1}_{i_{z},j_{z}}}$ is a max-plus tropical determinant while $\sum_{z=1}^{l}{T^{2}_{i_{z},j_{z}}}$ is not. So there exists a permutation $\tau$ on $J$ such that
	
	\[\sum_{z=1}^{l}{T^{2}_{i_{z},\tau(j_{z})}} > \sum_{z=1}^{l}{T^{2}_{i_{z},j_{z}}}.\]
	
	By Proposition \ref{prop:minordet}, for any $g\in [m]^{r}$ such that $g(i_{z})=j_{z}$ for $1\le z\le l$, $g$ does not belong to $\IC(\calT^{2})$.
	It suffices to show that there exists such a $g$ that belongs to $\IC(\calT^{1})$. We consider $\IC(\calT^{1}_{I,J})$, where $\calT_{I,J}$ is a type space with type matrix $T^{1}_{I,J}\in \R^{l\times l}$. Labeling the $z$-th row by $i_{z}$ and the $z$-th column by $j_{z}$, the the identity permutation always attains the max-plus tropical determinant for all principal minors of $T^{1}_{I,J}$ (otherwise we can find a larger value for $tdet\left(T^{1}_{I,J}\right)$, a contradiction). So $T^{1}_{I,J}$ satisfies the conditions in Proposition \ref{prop:minordet} for $g|_{I}$. As a result, there exists real numbers $x_{i_{z}}$ for $1\le z\le l$ such that (\ref{eqn:typespace}) holds when $i\in I$ and $k\in J$.
	
	Now it suffices to define $g(i)$ for $i\in [r]-I$ and construct $x_{j}$ for $j\in [m]-J$. For each $i\in [r]-I$, we let $g(i)$ be an index in $J \subseteq [m]$ such that $T_{i,g(i)}-x_{g(i)}$ attains the maximum of the set $\{\left(T_{i,j} - x_{j}\right)  \mid j\in J \}$. Then our $g$ has range $J$. As explained after the proof of Lemma \ref{lem:ic}, we just need to check that (\ref{eqn:reducedtypespace}) holds for all $i\in [r]-I$ and $k\in J$. Suppose $k=j_{z}$ for some $1\le z\le l$, then (\ref{eqn:reducedtypespace}) becomes
	
	\[T_{i, g(i)} - x_{g(i)} \ge T_{i, j_{z}} - x_{j_{z}}.\]
	
	This inequality holds by the choice of $g(i)$, and we showed the feasibility of the system (\ref{eqn:reducedtypespace}). Thus our $g\in \IC(\calT^{1}) - \IC(\calT^{2})$, the "only if" part is done.
\end{proof}

\begin{remark}
Fink and Rincon \cite{fink2015stiefel} defined the matching multifield of a matrix, which keeps track of the set of permutations that achieve the tropical determinant for all maximal minors. They showed \cite[Theorem 4.4]{fink2015stiefel} that if $\IC(\calT^{1})=\IC(\calT^{2})$, then the matching multifields of $T_{1}$ and $T_{2}$ are equal. Theorem \ref{thm:equalIC} strengthens this result by showing that the converse holds when one requires equality not just for maximal minors, but for all minors. 
\end{remark}

Given a subset $V\subseteq [m]^{r}$, we would like to find a characterization of all type spaces $\calT$ with type matrix $T\in \R^{r\times m}$ such that $\IC(\calT)=V$. For every $g\in V$, we define a family of non-strict inequalities with variables $x^{g}_{i}(1\le i\le m)$ and parameters $T_{i,j}(1\le i\le r, 1\le j\le m)$
	
\begin{equation}\label{eqn:goodnonstrict}
	T_{i,g(i)} - x^{g}_{g(i)} \ge T_{i,k} - x^{g}_{k} \quad \forall 1\le i\le r, 1\le k\le m, k\ne g(i).
\end{equation}

\subsection{Multi-player case}
For convenience, we also use a $r_{1}\times r_{2}\times \cdots \times r_{n}$ tensor to denote $g$ - we let its $(i_{1},\ldots,i_{n})$-th entry be the value $g\left(\calT^{1}_{i_{1}},\ldots, \calT^{n}_{i_{n}}\right)$. For $n\ge 2$, \cite[Proposition 6.1]{CrowellTran1} gave a simple characterization for a given tensor to be in $\IC(\calT^{1},\ldots,\calT^{n})$. For self-containment we recall their results here. 

\begin{proposition}\label{prop:genIC}
	Let $\calT^{1},\ldots,\calT^{n}$ be type spaces with type matrices $T^{1}\in \R^{r_{1}\times m}, \ldots, T^{n}\in \R^{r_{n}\times m}$. For any $g\in [m]^{r_{1}\times r_{2}\times \cdots \times r_{n}}$, $g\in \IC(\calT^{1},\ldots,\calT^{n})$ if and only if for every $1\le j\le n$ and any $1\le i_{k}\le r_{k}$ for $k\ne j$, the function $h: [r_{j}] \to [m]$ with $h(l)=g(i_{1},i_{2},\ldots,i_{j-1},l,i_{j+1},\ldots,i_{n})$ for $1\le l\le r_{j}$ belongs to $\IC(\calT^{j})$. 
\end{proposition}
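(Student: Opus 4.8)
The plan is to prove both implications directly from the definition of dominant-strategy incentive compatibility, exploiting the fact that condition \eqref{eqn:ic} is imposed separately for each player with the declarations of the others held fixed; the content of the proposition is simply that this per-player structure is faithfully captured by the axis-parallel slices of the tensor $g$.

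For the forward direction, suppose $g \in \IC(\calT^{1},\ldots,\calT^{n})$, so there is a payment $p:\calT\to\R^{n}$ for which $(g,p)$ satisfies \eqref{eqn:ic}. I would fix a coordinate $j$ and indices $i_{k}$ for $k\ne j$, set $h(l)=g(i_{1},\ldots,i_{j-1},l,i_{j+1},\ldots,i_{n})$, and define a single-player payment $\pi:[r_{j}]\to\R$ by letting $\pi(l)$ be the $j$-th coordinate of $p$ evaluated at the profile obtained by inserting $\calT^{j}_{l}$ into slot $j$. The instance of \eqref{eqn:ic} for player $j$, with the other declarations frozen at the $i_{k}$, is then literally the single-player incentive constraint \eqref{eqn:ic} for the mechanism $(h,\pi)$ on $\calT^{j}$. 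Hence $(h,\pi)$ is IC and $h\in\IC(\calT^{j})$.

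The reverse direction requires assembling one global payment out of the slice payments, and this is the step needing the most care. Assuming every axis-parallel slice $h$ lies in $\IC(\calT^{j})$, Lemma \ref{lem:ic} (equivalently, the classical single-player reduction) supplies for each such slice an outcome-based payment $q:[m]\to\R$ making it IC. Writing $s^{-j}$ for the tuple of coordinates of $s$ other than the $j$-th, I would define $p(s)_{j}:=q^{(j,\,s^{-j})}\bigl(g(s)\bigr)$, where $q^{(j,\,s^{-j})}$ is the payment attached to the slice in direction $j$ with the remaining coordinates frozen at $s^{-j}$. The crucial observation is that $p(s)_{j}$ depends on $s$ only through $s^{-j}$ and the outcome $g(s)$, so when player $j$ alone alters their declaration the payment rule $q^{(j,\,s^{-j})}$ is unchanged. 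Substituting into \eqref{eqn:ic} for player $j$ then collapses exactly to the single-player IC inequality for the corresponding slice, which holds by hypothesis; as this succeeds simultaneously for every $j$, the mechanism $(g,p)$ is IC and $g\in\IC(\calT^{1},\ldots,\calT^{n})$.

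The main obstacle is the consistency of this gluing: a priori each player's payment must be a function of the full profile, and one might worry that choices made to satisfy player $j$'s constraints interfere with those of another player $j'$. The construction sidesteps this because each player's payment is built independently from only the slice relevant to that player, and a unilateral deviation by one player leaves the other players' frozen coordinates intact. I therefore expect the verification to decouple cleanly into the single-player constraints already provided, with no genuine coupling across players to resolve.
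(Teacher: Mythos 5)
Your argument is correct. Note that the paper itself gives no proof of this proposition: it is recalled verbatim from \cite{CrowellTran1} (Proposition 6.1 there), so there is nothing internal to compare against. Your two directions are the standard ones and both check out: the forward direction restricts the global payment's $j$-th coordinate to a slice, and the reverse direction glues the slice payments by setting $p(s)_{j}=q^{(j,\,s^{-j})}\bigl(g(s)\bigr)$, which is well defined and decouples across players precisely because a unilateral deviation by player $j$ leaves $s^{-j}$, and hence the relevant payment rule, unchanged. The only remark worth adding is that the outcome-based form of the slice payments from Lemma \ref{lem:ic} is convenient but not essential here; any IC slice payment $\pi^{(j,\,s^{-j})}:[r_{j}]\to\R$ would glue in the same way.
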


In particular, when $n=2$ we have the following corollary. 

\begin{corollary}\label{cor:IC2}
	For any matrix $g\in [m]^{r_{1}\times r_{2}}$ representing a function $[r_{1}]\times [r_{2}] \to [m]$  and two type spaces $\calT^{1},\calT^{2}$, $g\in \IC(\calT^{1},\calT^{2})$ if and only every column vector of $g$ belongs to $\calT^{1}$ and every row vector of $g$ belongs to $\calT^{2}$.
\end{corollary}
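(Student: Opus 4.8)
The plan is to obtain Corollary \ref{cor:IC2} as the immediate specialization of Proposition \ref{prop:genIC} to the case $n = 2$. First I would set $n = 2$ in Proposition \ref{prop:genIC}, so that the outer index $j$ ranges only over $\{1,2\}$ and there is a single remaining free index in each case. The proposition then asserts that $g \in \IC(\calT^{1},\calT^{2})$ if and only if two families of one-dimensional slices are all single-player incentive compatible, and the whole content of the corollary is to recognize these slices as the rows and columns of the matrix $g$.

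Concretely, I would treat the two values of $j$ separately. For $j = 1$, fixing $1 \le i_{2} \le r_{2}$, the function $h \colon [r_{1}] \to [m]$ given by $h(l) = g(l, i_{2})$ is exactly the $i_{2}$-th column vector of the matrix $g$; Proposition \ref{prop:genIC} requires $h \in \IC(\calT^{1})$, which is the condition that every column of $g$ lie in $\IC(\calT^{1})$. Symmetrically, for $j = 2$, fixing $1 \le i_{1} \le r_{1}$, the slice $h(l) = g(i_{1}, l)$ is the $i_{1}$-th row of $g$, and the requirement $h \in \IC(\calT^{2})$ says precisely that every row of $g$ lie in $\IC(\calT^{2})$. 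Since the quantifier ``for every $1 \le j \le n$ and any $1 \le i_{k} \le r_{k}$ for $k \ne j$'' collapses to exactly these two families when $n = 2$, the biconditional of Proposition \ref{prop:genIC} becomes verbatim the biconditional of the corollary.

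There is essentially no analytic obstacle here: the argument is a direct reindexing, and the only point requiring care is the bookkeeping that identifies the abstract slices $h$ in Proposition \ref{prop:genIC} with the concrete rows and columns of the tensor-as-matrix representation of $g$. I would make sure to state explicitly that ``belongs to $\calT^{1}$'' (resp. $\calT^{2}$) is shorthand for ``belongs to $\IC(\calT^{1})$'' (resp. $\IC(\calT^{2})$), consistent with the phrasing already used in Example \ref{exa:arrangement}, so that the correspondence between the proposition's hypothesis and the corollary's conclusion is unambiguous.
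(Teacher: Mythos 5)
Your proposal is correct and is exactly the paper's (implicit) argument: the paper derives Corollary \ref{cor:IC2} directly as the $n=2$ specialization of Proposition \ref{prop:genIC}, with the slices identified as the rows and columns of $g$, just as you do. Your remark that ``belongs to $\calT^{1}$'' should be read as ``belongs to $\IC(\calT^{1})$'' correctly resolves the only ambiguity in the statement.
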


By replacing the entries $T_{i,j}$ using (\ref{eqn:tensorcomb}) in Lemma \ref{lem:ic}, we have the following characterization for tensors which are affine maximizers. 

\begin{lemma}\label{lem:am}
	Let $g \in [m]^{r_{1} \times \cdots \times r_{n}}$. There exists $U^{1} \in \R^{r_{1} \times m}, \ldots, U^{n} \in \R^{r_{n} \times m}$ such that $g \in \AM(U^{1},\ldots,U^{n})$ if and only if the following system in the variables $Z, U^{1}, \ldots, U^{n}$ is feasible:
	
	\begin{equation}\label{eqn:Z.AM}
			\sum_{j=1}^{n}{U^{j}_{i_{j}, g(i_{1},\ldots,i_{n})}} - Z_{g(i_{1},\ldots,i_{n})}
			\ge \sum_{j=1}^{n}{U^{j}_{i_{j}, k}} - Z_{k} \quad \forall 1\le i_{j}\le r_{j}, 1\le k\le m, k\ne g(i_{1},\ldots,i_{n}). 
	\end{equation}
\end{lemma}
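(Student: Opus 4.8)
The plan is to recognize that Lemma \ref{lem:am} is the affine-maximizer analogue of Lemma \ref{lem:ic}, obtained by substituting the tensor-combination rows \eqref{eqn:tensorcomb} for the abstract type-matrix entries, and that the weights $\alpha_j$ and the additive constant $\gamma$ appearing in the definition of an affine maximizer may be absorbed into the free matrix variables $U^j$ and the free shift vector $Z$ respectively.

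First I would unwind the definition: $g \in \AM(U^1,\ldots,U^n)$ for some $U^1,\ldots,U^n$ holds precisely when there exist $\alpha_1,\ldots,\alpha_n > 0$, $\gamma \in \R^m$, and a single-player IC outcome function $h$ on the type space $M(\alpha_1 U^1,\ldots,\alpha_n U^n,\gamma)$ agreeing with $g$ under the tensor-to-vector identification. I would then apply Lemma \ref{lem:ic} to this single-player type space, whose type matrix $T$ has $(i_1,\ldots,i_n)$-th row equal to $\alpha_1 U^1_{i_1}+\cdots+\alpha_n U^n_{i_n}+\gamma$. Lemma \ref{lem:ic} says that $h=g$ is IC on it if and only if the linear system \eqref{eqn:typespace} in the variable $x$ is feasible.

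Next I would perform the substitution: writing out $T_{(i_1,\ldots,i_n),k}=\sum_{j} \alpha_j U^j_{i_j,k}+\gamma_k$ and setting $Z_k := x_k-\gamma_k$, the two copies of $\gamma$ cancel and \eqref{eqn:typespace} becomes exactly \eqref{eqn:Z.AM}, but with each $U^j$ replaced by $\alpha_j U^j$. The final step is the quantifier bookkeeping. Since the $U^j$ are quantified existentially and each $\alpha_j$ is required only to be positive, the products $\alpha_j U^j$ range over all of $\R^{r_j \times m}$ as the $U^j$ do; likewise $Z$ ranges over all of $\R^m$ as $x$ and $\gamma$ do. Hence feasibility of \eqref{eqn:Z.AM} in $(Z,U^1,\ldots,U^n)$ is equivalent to the existence of type spaces making $g$ an affine maximizer. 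Concretely, for the forward direction I would take the $U^j$-variables to be the given $\alpha_j U^j$ and $Z=x-\gamma$; for the converse I would take the solution matrices $U^j$ as the type matrices themselves, weights $\alpha_j=1$, $\gamma=\mathbf{0}$, and $x=Z$, and read Lemma \ref{lem:ic} backwards.

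There is no serious obstacle here; the one point that must be stated carefully is precisely this absorption of parameters. One should verify that the strict positivity $\alpha_j>0$ imposes no restriction, because it is the product $\alpha_j U^j$, not $U^j$ alone, that is constrained by \eqref{eqn:Z.AM}, and either scaling a feasible matrix solution by a positive constant or simply choosing $\alpha_j=1$ keeps the weights positive while leaving the existential statement unchanged.
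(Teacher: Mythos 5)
Your proposal is correct and matches the paper's approach: the paper proves this lemma in one line by substituting the tensor-combination rows \eqref{eqn:tensorcomb} into the system \eqref{eqn:typespace} of Lemma \ref{lem:ic}, which is exactly what you do, with the absorption of $\gamma$ into $Z$ and of the positive weights $\alpha_j$ into the existentially quantified $U^j$ spelled out explicitly.
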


\section{Proofs}\label{sec:proof}

The proof of Theorem \ref{thm:main} relies on Farkas Lemma, Lemma \ref{lem:ic} and \ref{lem:am} that we introduced in Section \ref{sec:typespaces}. Analyzing the sum of inequalities in cycles, we argue that the nonexistence of such a pair $\calS^{1},\calS^{2}$ would imply the nonexistence of $\calT^{1}$, which is a contradiction to our assumption. The proofs of Theorem \ref{thm:main.counter} and \ref{thm:2p-sym.counter} involve exhibiting specific type spaces where the claimed statements fail.

\subsection{Proof of Theorem \ref{thm:main}}

Fix type space $\calT^{1}$ with type matrix $T^{1} \in \R^{r_1 \times m}$ and type space $\calT^{2}$ with type matrix $T^{2} \in \R^{r_2 \times m}$ and $g\in [m]^{r_{1}\times r_{2}}$ such that $g\in \IC(\calT^{1}, \calT^{2})$. To prove Theorem \ref{thm:main}, we need to show that there exist type spaces $\calS^{2}$ with type matrix $S^{2} \in \R^{r_2 \times m}$ such that $g\in \AM(\calT^{1}, \calS^{2})$. By Lemma \ref{lem:am} (when $n=2$), it suffices to show that there exist real numbers $S^{2}_{j,k}(1\le j\le r_{2}, 1\le k\le m)$ and $Z_{k}(1\le k\le m)$ such that for $1\le i\le r_{1}, 1\le j\le r_{2}, 1\le k\le m$, we have 

\begin{equation}\label{eqn:main-AM}
	T^{1}_{i,g(i,j)}+S^{2}_{j,g(i,j)} - Z_{g(i,j)} \ge T^{1}_{i,k}+S^{2}_{j,k} - Z_{k}.
\end{equation}

For $1\le j\le r_{2}$, let $g^{j}$ be the $j$-th column vector of $g$. By Corollary \ref{cor:IC2}, since $g\in \IC(\calT^{1},\calT^{2})$, every $g^{j}\in \IC(\calT^{1})$. By (\ref{eqn:goodnonstrict}), there exist corresponding vectors $x^{j}\in \R^{m}$ such that for $1\le i\le r_{1}, 1\le j\le r_{2}, 1\le k\le m$, we have (note that the $i$-th entry of $g^{j}$ is just $g(i,j)$)

\begin{equation}\label{eqn:V}
	T^{1}_{i,g(i,j)} - x^{j}_{g(i,j)} \ge T^{1}_{i,k} - x^{j}_{k}.
\end{equation}

Now for $1\le j\le r_{2}, 1\le k\le m$, we let $S^{2}_{j,k} = -x^{j}_{k}$ and $Z_{k}=0$. Then $S^{2}_{j,g(i,j)} = - x^{j}_{g(i,j)}$, and (\ref{eqn:main-AM}) becomes

\[ T^{1}_{i,g(i,j)} - x^{j}_{g(i,j)} \ge T^{1}_{i,k} - x^{j}_{k}, \]

which is exactly (\ref{eqn:V}). So our choice of $S^{2}$ and $Z$ satisfy (\ref{eqn:main-AM}), and Theorem \ref{thm:main} is proved.

\subsection{Proof of Theorem \ref{thm:main.counter}(i)}

To prove Theorem \ref{thm:main.counter}(i), we shall construct an explicit type space $\calT = \calT^1 \times \calT^2 \times \calT^3$ on three players and an IC mechanism $g$ on $\calT$ such that for any type space $\calS = \calS^1 \times \calS^2 \times \calS^3$,
$g \in \AM(\calS^{1},\calS^{2},\calS^{3})$ and $\IC(\calT^{1})=\IC(\calS^{1})$ cannot happen simultaneously. Let $r_{1}=r_{2}=r_{3}=2$ and $m=6$. Define $g \in \{1,2,\dots,6\}^{2 \times 2 \times 2}$ as
	\begin{align*}
		&g(1,1,1)=1, g(1,1,2)=3, g(1,2,1)=4, g(1,2,2)=1, \\
		&g(2,1,1)=5, g(2,1,2)=2, g(2,2,1)=2, g(2,2,2)=6.
	\end{align*}
Define $\calT^{1},\calT^{2},\calT^{3}$ to be the set of the column vectors of the following matrices 
	\begin{equation*}
		T^{1} = \begin{bmatrix}
			3& 4& 5& 6& 2& 1 \\
		 	0& 0& 0& 0& 0& 0
		\end{bmatrix}, \quad
		T^{2} = \begin{bmatrix}
		5& 2& 3& 6& 4& 1 \\
		0& 0& 0& 0& 0& 0
	\end{bmatrix},\quad
		T^{3} = \begin{bmatrix}
		5& 2& 6& 3& 4& 1 \\
		0& 0& 0& 0& 0& 0
	\end{bmatrix}.
	\end{equation*}
	
We verify that $g\in \IC(\calT^{1},\calT^{2},\calT^{3})$ by checking that the column vectors of corresponding flattenings of $g$ belong to $\IC(\calT^{i})$, where $i=1,2,3$.
	\begin{equation*}
		\begin{split}
			(1,5), (3,2), (4,2), (1,6) &\in \IC(\calT^{1}); \\
			(1,3), (4,1), (5,2), (2,6) &\in \IC(\calT^{2}); \\
			(1,4), (3,1), (5,2), (2,6) &\in \IC(\calT^{3}).
		\end{split}
	\end{equation*}
	
	Now, suppose $g \in AM(\calS^1,\calS^2,\calS^3)$ for some type space $\calS = \calS^1 \times \calS^2 \times \calS^3$, each $\calS^{i}$ represented by the $2\times 2$ matrix $S^{i}$. By Lemma \ref{lem:am}, this holds if and only if the following system of linear inequalities in variable $z$ is feasible:
\vskip-0.5cm	
	\begin{equation*}
S^{1}_{i_{1},g(i_{1},i_{2},i_{3})}+S^{2}_{i_{1},g(i_{1},i_{2},i_{3})}+S^{3}_{i_{1},g(i_{1},i_{2},i_{3})} - z_{g(i_{1},i_{2},i_{3})} \ge S^{1}_{i_{1},k}+S^{2}_{i_{2},k}+S^{3}_{i_{3},k}-z_{k}, 
	\end{equation*}
where the indices run over all $i_{1},i_{2},i_{3},k$ such that $1\le i_{1},i_{2},i_{3}\le 2, 1\le k\le 6, k\ne g(i_{1},i_{2},i_{3}).$ In particular, we have
	
	\begin{equation}\label{ine:333circuit}
	\begin{split}
	S^{1}_{1,1}+S^{2}_{1,1}+S^{3}_{1,1}-z_{1} &\ge S^{1}_{1,2}+S^{2}_{1,2}+S^{3}_{1,2}-z_{2} \quad [(i_1,i_2,i_3,k)=(1,1,1,2)]; \\
	S^{1}_{1,1}+S^{2}_{2,1}+S^{3}_{2,1}-z_{1} &\ge S^{1}_{1,2}+S^{2}_{2,2}+S^{3}_{2,2}-z_{2} \quad [(i_1,i_2,i_3,k)=(1,2,2,2)]; \\
	S^{1}_{2,2}+S^{2}_{1,2}+S^{3}_{2,2}-z_{2} &\ge S^{1}_{2,1}+S^{2}_{1,1}+S^{3}_{2,1}-z_{1} \quad [(i_1,i_2,i_3,k)=(2,1,2,1)]; \\
	S^{1}_{2,2}+S^{2}_{2,2}+S^{3}_{1,2}-z_{2} &\ge S^{1}_{2,1}+S^{2}_{2,1}+S^{3}_{1,1}-z_{1} \quad [(i_1,i_2,i_3,k)=(2,2,1,1)].
	\end{split}
	\end{equation}
	
	Summing over the four inequalities in (\ref{ine:333circuit}), we get 
	
	\begin{equation*}
		S^{1}_{1,1}+S^{1}_{2,2} \ge S^{1}_{1,2}+S^{1}_{2,1}.
	\end{equation*} 

Now suppose that in addition, $\IC(\calS^{1})=\IC(\calT^{1})$. Since the max-plus tropical determinant of $T^{1}_{\{1,2\}, \{1,2\}}$ is only attained by $T^{1}_{1,1}+T^{1}_{2,2}$, by Theorem \ref{thm:equalIC} we also have

\begin{equation}\label{eqn:s.strict.more}
		S^{1}_{1,1}+S^{1}_{2,2} > S^{1}_{1,2}+S^{1}_{2,1}.
\end{equation}

	However, $(2,1)\in \IC(\calT^{1})=\IC(\calS^{1})$, which implies that there exist $X_{1},X_{2}\in \R$ such that
	\begin{equation*}
			S^{1}_{1,2} - X_{2} \ge S^{1}_{1,1} - X_{1}; \quad 
			S^{1}_{2,1} - X_{1} \ge S^{1}_{2,2} - X_{2}.
	\end{equation*}
Summing up these two inequalities to eliminate $X_{1},X_{2}$, we obtain	
	\begin{equation*}
	S^{1}_{1,2}+S^{1}_{2,1} \ge S^{1}_{1,1}+S^{1}_{2,2},
	\end{equation*} 
but this contradicts (\ref{eqn:s.strict.more}). Hence $g\in \AM(\calS^{1},\calS^{2},\calS^{3})$ and $\IC(\calT^{1})=\IC(\calS^{1})$ cannot happen simultaneously.

\subsection{Proof of Theorem \ref{thm:main.counter}(ii)}

To prove Theorem \ref{thm:main.counter}(ii), we construct an explicit type space $\calT = \calT^1 \times \calT^2$ and an IC mechanism $g$ on $\calT$ such that for any type space $\calS = \calS^1 \times \calS^2$, $g \in AM(\calS^1,\calS^2)$, $\IC(\calS^1) = \IC(\calT^1)$ and $\IC(\calS^2) = \IC(\calT^2)$ cannot happen simultaneously. Let $r_{1}=r_{2}=2$ and $m=4$. Define
	\begin{equation*}
	T^{1}=\begin{bmatrix}
	13& 46& 9& 11 \\
	45& 47& 1& 24
	\end{bmatrix},
	T^{2}=\begin{bmatrix}
	12& 8 & 19& 38 \\
	28& 46& 19& 4
	\end{bmatrix},
	g=\begin{bmatrix}
	4 & 3 \\ 1 & 2
	\end{bmatrix}.
	\end{equation*}
	And we let $\calT^{i}$ be the type spaces with type matrix $T^{i}$ for $i=1,2$. We have that
	\begin{align*}
	\IC(\calT^{1})&=\{(1, 1), (2, 1), (2, 2), (2, 4), (3, 1), (3, 2), (3, 3), (3, 4), (4, 1), (4, 4) \} \\
	\IC(\calT^{2})&=\{(1, 1), (1, 2), (2, 2), (3, 1), (3, 2), (3, 3), (4, 1), (4, 2), (4, 3), (4, 4) \},
	\end{align*}
so $g \in \IC(\calT^{1},\calT^{2})$. Now, suppose for contradiction that there exists a type space $\calS = \calS^1 \times \calS^2$ that satisfy all three conditions. Since $\IC(\calS^1) = \IC(\calT^1)$ and $T^{1}_{1,2}+T^{1}_{2,4} = 46+24 =70 > 58 = 11+47 = T^{1}_{1,4}+T^{1}_{2,2}$, by Theorem \ref{thm:equalIC} we also have

\begin{equation}\label{ine:224-s1}
	S^{1}_{1,2} + S^{1}_{2,4} > S^{1}_{1,4} + S^{1}_{2,2}.
\end{equation} 

Since $\IC(\calS^2) = \IC(\calT^2)$ and $T^{2}_{1,3}+T^{2}_{2,1} = 19 + 28  > 12 + 19 = T^{2}_{1,1}+T^{2}_{2,3}$, by Theorem \ref{thm:equalIC} we also have

\begin{equation}\label{ine:224-s2}
	S^{2}_{1,3} + S^{2}_{2,1} > S^{2}_{1,1} + S^{2}_{2,3}.
\end{equation} 

Note that $\IC(\calS^1) = \IC(\alpha \calS^1)$ for all positive $\alpha > 0$, one may assume that the constants $\alpha_1, \alpha_2$ in the affine maximizer equal to one. Since $g\in \AM(\calS^{1},\calS^{2})$ with constants $\alpha_1 = \alpha_2 = 1$, by Lemma \ref{lem:am} we can find $Z\in \R^{4}$ such that
	\begin{equation}\label{ine:224-z}
	\begin{split}
	S^{1}_{1,4}+S^{2}_{1,4}-Z_{4}&\ge S^{1}_{1,3}+S^{2}_{1,3}-Z_{3}\\
	S^{1}_{1,3}+S^{2}_{2,3}-Z_{3}&\ge S^{1}_{1,2}+S^{2}_{2,2}-Z_{2}\\
	S^{1}_{2,1}+S^{2}_{1,1}-Z_{1}&\ge S^{1}_{2,4}+S^{2}_{1,4}-Z_{4}\\
	S^{1}_{2,2}+S^{2}_{2,2}-Z_{2}&\ge S^{1}_{2,1}+S^{2}_{2,1}-Z_{1}\\
	\end{split}
	\end{equation}
	
	Summing over all inequalities (\ref{ine:224-s1}), (\ref{ine:224-s2}) and (\ref{ine:224-z}), we have $0 > 0$, a contradiction.

\subsection{Proof of Theorem \ref{thm:main.counter}(iii)}

Let $m=3,n=2$ and $r_{1}=r_{2}=2$. Define
	
	\begin{equation*}
		T^{1}=\begin{bmatrix}
			0& 1& 3 \\
			0& 2& 1
		\end{bmatrix},
		T^{2}=\begin{bmatrix}
			0& 4& 2 \\
			0& 2& 0
		\end{bmatrix}, g^{1}=\begin{bmatrix}
		2& 3 \\
		2& 1
		\end{bmatrix},
		g^{2}=\begin{bmatrix}
		3& 1 \\
		3& 2
		\end{bmatrix}.
	\end{equation*}
	And let $\calT^{1},\calT^{2}$ be the type spaces with type matrices $T^{1},T^{2}$ respectively. We have
	
	\begin{equation*}
		\begin{split}
			\IC(\calT^{1})&=\{(1, 1), (1, 2), (2, 2), (3, 1), (3, 2), (3, 3) \} \\
			\IC(\calT^{2})&=\{(1, 1), (2, 1), (2, 2), (2, 3), (3, 1), (3, 2), (3, 3) \}. 
		\end{split}
	\end{equation*}
	
	By Corollary \ref{cor:IC2}, $g^{1},g^{2}\in \IC(\calT^{1},\calT^{2})$. To prove Theorem \ref{thm:main.counter}(iii), we need to show that there do not exist $\calS^{1},\calS^{2}$ with type matrices $S^{1},S^{2} \in \R^{2\times 3}$ such that
$g^{1},g^{2}\in \AM(\calS^{1},\calS^{2})$ and $\IC(\calS^{1})=\IC(\calT^{1})$. Suppose for contradiction that such $\calS^{1},\calS^{2}$ exist. By definition, there exist positive constants $\alpha_{1}, \alpha_{2}, \beta_{1}, \beta_{2}$ such that $g^{1}\in \IC(\alpha_{1}\calS^{1},\alpha_{2}\calS^{2})$ and $g^{2}\in \IC(\beta_{1}\calS^{1},\beta_{2}\calS^{2})$. Then there exist $Y,Z\in \R^{3}$ such that for $1\le i,j\le 2, 1\le p\le 3$, 
	
	\begin{equation*}
		\begin{split}
			\alpha_{1}S^{1}_{i,g^{1}(i,j)} + \alpha_{2}S^{2}_{j,g^{1}(i,j)} - Y_{g^{1}(i,j)} &\ge \alpha_{1}S^{1}_{i,p} + \alpha_{2}S^{2}_{j,p} - Y_{p}; \\
			\beta_{1}S^{1}_{i,g^{2}(i,j)}+\beta_{2}S^{2}_{j,g^{2}(i,j)}-Z_{g^{2}(i,j)} &\ge \beta_{1}S^{1}_{i,p}+\beta_{2}S^{2}_{j,p} - Z_{p}.
		\end{split}
	\end{equation*} 

	Hence we have the following inequalities:
	
	\begin{equation}\label{eqn:y3}
		\begin{split}
			\alpha_{1}S^{1}_{1,2}+\alpha_{2}S^{2}_{1,2} - Y_{2}&\ge \alpha_{1}S^{1}_{1,3}+\alpha_{2}S^{2}_{1,3}-Y_{3} \quad  [(i,j,g^{1}(i,j),p)=(1,1,2,3)]; \\
			\alpha_{1}S^{1}_{1,3}+\alpha_{2}S^{2}_{2,3} - Y_{3}&\ge \alpha_{1}S^{1}_{1,1}+\alpha_{2}S^{2}_{2,1}-Y_{1} \quad  [(i,j,g^{1}(i,j),p)=(1,2,3,1)]; \\
			\alpha_{1}S^{1}_{2,1}+\alpha_{2}S^{2}_{2,1} - Y_{1}&\ge \alpha_{1}S^{1}_{2,2}+\alpha_{2}S^{2}_{2,2}-Y_{2} \quad  [(i,j,g^{2}(i,j),p)=(2,2,1,2)]
		\end{split}
	\end{equation}
	
	and
	
	\begin{align}
			\beta_{1}S^{1}_{1,1}+\beta_{2}S^{2}_{2,1}- Z_{1}&\ge \beta_{1}S^{1}_{1,3}+\beta_{2}S^{2}_{2,3}-Z_{3}\quad [(i,j,g^{2}(i,j),p)=(1,2,1,3)]; \label{eqn:Z123} \\
			\beta_{1}S^{1}_{2,3}+\beta_{2}S^{2}_{1,3}- Z_{3}&\ge \beta_{1}S^{1}_{2,2}+\beta_{2}S^{2}_{1,2}-Z_{2}\quad [(i,j,g^{2}(i,j),p)=(2,1,3,2)]; \label{eqn:Z212} \\
			\beta_{1}S^{1}_{2,2}+\beta_{2}S^{2}_{2,2}- Z_{2}&\ge \beta_{1}S^{1}_{2,1}+\beta_{2}S^{2}_{2,1}-Z_{1}\quad [(i,j,g^{2}(i,j),p)=(2,2,2,1)]; \label{eqn:Z221} \\
			\beta_{1}S^{1}_{2,2}+\beta_{2}S^{2}_{2,2}- Z_{2}&\ge \beta_{1}S^{1}_{2,3}+\beta_{2}S^{2}_{2,3}-Z_{3}\quad [(i,j,g^{2}(i,j),p)=(2,2,2,3)]. \label{eqn:Z223}
	\end{align}
	
	Summing over the inequalities in (\ref{eqn:y3}), we get
	
	\begin{equation}\label{eqn:yz1}
		\alpha_{1}\cdot \left[S^{1}_{1,2} + S^{1}_{2,1} - S^{1}_{1,1} - S^{1}_{2,2}\right]+\alpha_{2}\cdot \left[S^{2}_{1,2} + S^{2}_{2,3} - S^{2}_{1,3} - S^{2}_{2,1} \right] \ge 0.
	\end{equation}
	
	Add up equations (\ref{eqn:Z123}),(\ref{eqn:Z212}) and (\ref{eqn:Z221}), we get 
	
	\begin{equation}\label{eqn:yz2}
		\beta_{1}\cdot \left[S^{1}_{1,1} + S^{1}_{2,3} - S^{1}_{1,3} - S^{1}_{2,1} \right] + \beta_{2}\cdot \left[ S^{2}_{1,3} + S^{2}_{2,2} - S^{2}_{1,2} - S^{2}_{2,3}\right] \ge 0.
	\end{equation}
	
	Add up (\ref{eqn:Z212}) and (\ref{eqn:Z223}), divide by $\beta_{2}>0$, we get
	
	\begin{equation}\label{eqn:s232}
		S^{2}_{1,3} + S^{2}_{2,2} - S^{2}_{1,2} - S^{2}_{2,3} \ge 0.
	\end{equation}
	
	Note that $\IC(\calS^{1})=\IC(\calT^{1})$ and 
	
	\begin{align}
		T^{1}_{1,1}+T^{1}_{2,2} = 0 + 2 &> 1 + 0 = T^{1}_{1,2} - T^{1}_{2,1}; \\
		T^{1}_{1,3}+T^{1}_{2,2} = 3 + 2 &> 1 + 1 = T^{1}_{1,2} - T^{1}_{2,3},
	\end{align}
	by Theorem \ref{thm:equalIC}, we also have

	\begin{align}
		S^{1}_{1,1}+S^{1}_{2,2} - S^{1}_{1,2} - S^{1}_{2,1} & > 0, \label{eqn:s112} \\
		S^{1}_{1,3}+S^{1}_{2,2} - S^{1}_{1,2} - S^{1}_{2,3} & > 0. \label{eqn:s123}
	\end{align}
	
	Now if $\alpha_{1}\beta_{2}\ge \alpha_{2}\beta_{1}$, then we take the sum
	
	\begin{equation*}
			\alpha_{2}\beta_{1}\cdot (\ref{eqn:s123}) + \left(\alpha_{1}\beta_{2} - \alpha_{2}\beta_{1} \right)\cdot (\ref{eqn:s112})
			+ \beta_{2}\cdot (\ref{eqn:yz1}) + \alpha_{2}\cdot (\ref{eqn:yz2}),
	\end{equation*}
	which is a $\R_{\ge 0}$-linear combination of these inequalities, and its LHS vanishes. Since (\ref{eqn:s123}) is a strict inequality and $\alpha_{2}\beta_{1}>0$, it implies $0>0$, a contradiction!
	
	Similarly, if $\alpha_{1}\beta_{2}<\alpha_{2}\beta_{1}$, then we take the sum
	\begin{equation*}
			\alpha_{1}\beta_{1}\cdot (\ref{eqn:s123}) + \left(\alpha_{2}\beta_{1} - \alpha_{1}\beta_{2} \right)\cdot (\ref{eqn:s232})
			+ \beta_{1}\cdot (\ref{eqn:yz1}) + \alpha_{1}\cdot (\ref{eqn:yz2}),
	\end{equation*}
	which is a $\R_{+}$-linear combination of these inequalities, and its LHS vanishes too. Since (\ref{eqn:s123}) is a strict inequality and $\alpha_{1}\beta_{1}>0$, it implies $0 > 0$, a contradiction, too! So in either cases there is a contradiction, and such a pair of $\calS^{1},\calS^{2}$ does not exist.

\subsection{Proof of Theorem \ref{thm:2p-sym.counter}}

We consider	a type space $\calT = \calT^{1} \times \calT^{1}$, here the type matrix corresponding to $\calT^{1}$ is
	\begin{equation*}
	T^{1}=\begin{bmatrix}
	0 & 5 & 6 \\
	0 & 0 & 3 \\
	0 & 4 & 0
	\end{bmatrix}. 
	\end{equation*}
	In addition we let
	\[g=\begin{bmatrix}
		2 & 3 & 2 \\
		3 & 3 & 1 \\
		2 & 1 & 1
		\end{bmatrix}. \]
	Then 
	\begin{equation*}
		\begin{split}
				\IC(\calT^{1})=&\{(1, 1, 1), (2, 1, 1), (2, 1, 2), (2, 2, 2), (2, 3, 2), \\
				& (3, 1, 1), (3, 1, 2), (3, 3, 1), (3, 3, 2), (3, 3, 3)\},
		\end{split}
	\end{equation*}
	by Corollary \ref{cor:IC2}, $g \in \IC(\calT^{1},\calT^{1})$. However, there is no type space $\calS$ with $3\times 3$ type matrix $S$ such that $g\in \AM(\calS, \calS)$ and 
	$\IC(\calS)=\IC(\calT)$. Indeed, suppose for contradiction that such $\calS$ exists. One may assume that $\alpha_1 = \alpha_2 = 1$. In addition, $(3,1,2)\in \IC(\calT)=\IC(\calS)$ (this cell type $(3,1,2)$ corresponds to the unique shaded cell in the right picture of Figure \ref{fig:T1+T2}). So for any $Y\in \R^{3}$ in the interior of that cell, by (\ref{eqn:typespace}) we have
	
	\begin{equation*}
	\begin{split}
		S_{1,3}-Y_{3} &> S_{1,k} - Y_{k} \quad \forall k=1,2; \\
		S_{2,1}-Y_{1} &> S_{2,k} - Y_{k} \quad \forall k=2,3; \\
		S_{3,2}-Y_{2} &> S_{3,k} - Y_{k} \quad \forall k=1,3.
	\end{split}
	\end{equation*}
	
	Since $g \in \AM(\calS,\calS)$ with constants $\alpha_1 = \alpha_2 = 1$, by Lemma \ref{lem:am} there exists $Z\in \R^{3}$ such that
	
	\begin{equation*}
	S_{i,g(i,j)} + S_{j,g(i,j)} - Z_{g(i,j)} \ge S_{i,k} + S_{j,k} - Z_{k} \quad \forall 1\le i,j,k\le 3, k\ne g(i,j).
	\end{equation*}
	
	So we have the following inequalities
	
	\begin{equation}\label{ine:3-circuit}
		\begin{split}
			2(S_{1,3} - Y_{3}) &> 2(S_{1,2} - Y_{2}) \\
			2(S_{2,1} - Y_{1}) &> 2(S_{2,3} - Y_{3}) \\
			2(S_{3,2} - Y_{2}) &> 2(S_{3,1} - Y_{1}) \\
			2S_{1,2} - Z_{2} &\ge 2S_{1,3} - Z_{3} \quad [(i,j,k)=(1,1,3)] \\
			2S_{2,3} - Z_{3} &\ge 2S_{2,1} - Z_{1} \quad [(i,j,k)=(2,2,1)] \\
			2S_{3,1} - Z_{1} &\ge 2S_{3,2} - Z_{2} \quad [(i,j,k)=(3,3,2)] \\
		\end{split}
	\end{equation}
	
	Summing over the inequalities in (\ref{ine:3-circuit}), we obtain the desired contradiction $0>0$. 

\section{Summary}\label{sec:summary}

In this paper, we take a novel view of the classical Roberts' Theorem and ask if a given mechanism can be turned into an affine maximizer on an equivalent type space. We give an affirmative result in this direction, Theorem \ref{thm:main}, which applies to all finite type spaces on two-player games. Through a series of counterexamples, we show that our theorems are strongest possible in this general setup. 
Our proof technique utilizes insights from tropical geometry, and raises a number of questions of interest to both economists and tropical geometers. 

The second open question is whether our theorems hold for continuous type spaces. One reason to pursue this question is to obtain a novel alternative proof of the original Roberts' Theorem that is distinct from the analysis-based techniques in the current literature, and to make our results easier to compare with the others on this topic. While compact type spaces can be approximated by discretization, and a number of tropical geometric results still hold in the limit \cite{CrowellTran1}, the dimensions of the matrices involved become infinite. In particular, one then has an infinite system of inequalities and variables, and thus the techniques applied in our paper no longer hold, presenting a significant challenge. For type spaces such as $\R^n$, the set of IC outcome functions completely determines the type space. Thus, a starting point for this open question would be: for what set of type spaces do the set of IC outcome functions completely determines the type space, and does Theorem \ref{thm:main} hold in those cases?

\section*{Acknowledgments}
We thank Benny Moldovanu and Robert Crowell for interesting conversations. We also thank an anonymous referee for careful reading and suggestions that played an important role in simplifying the proof of Theorem 1.1. Bo Lin is supported by a Simons Postdoctoral Research Fellowship at the University of Texas at Austin. Ngoc Mai Tran is supported by the Bonn Research Fellowship at the Hausdorff Center for Mathematics.

\bibliographystyle{plain}

\end{document}